\newcommand{\ldim}[1]{\operatorname{ldim}\big(#1\big)}
\newcommand{\pdim}[1]{\operatorname{dim}\big(#1\big)}
\newcommand{\twodim}[1]{\operatorname{dim}_2\big(#1\big)}
\newcommand{\cube}[2]{\mathcal{Q}^{#1}_{#2}}
\DeclareMathOperator{\N}{\mathbb{N}}
\DeclareMathOperator{\R}{\mathbb{R}}
\DeclareMathOperator{\Expect}{\mathbb{E}}
\theoremstyle{plain}
\newtheorem{theorem}{Theorem}
\newtheorem{conjecture}{Conjecture}
\newtheorem{proposition}[theorem]{Proposition}
\newtheorem{corollary}[theorem]{Corollary}
\newtheorem{question}{Question}
\begin{document}

\title{The local dimension of suborders of the Boolean lattice}
\author{David Lewis \\{\small Department of Mathematical Sciences} \\ {\small University of Memphis} \\ {\small\tt davidcharleslewis@outlook.com}}

\maketitle

\begin{abstract}
We prove upper and lower bounds on the local dimension of any pair of layers of the Boolean lattice, and show that $\ldim{\cube{n}{1,\lfloor n/2\rfloor}}\sim\frac{n}{\log_2 n}$ as $n\to\infty$. Previously, all that was known was a lower bound of $\Omega(n/\log n)$ and an upper bound of $n$.

Improving a result of Kim, Martin, Masa\v{r}\'{i}k, Shull, Smith, Uzzell, and Wang, we also prove that that the maximum local dimension of an $n$-element poset is at least $\left(\frac{1}{4}-o(1)\right)\frac{n}{\log_2 n}$.


\end{abstract}

\section{Introduction}

Before stating the problems we want to solve, let us review some definitons and notation. The notation we use is mostly standard. Throughout this paper, all posets are assumed to be nonempty (we take this as part of the definition of a poset) and all logarithms are base $2$ unless otherwise specified.


For any $n\in\N$, a chain of cardinality $n$ is denoted by boldface $\mathbf{n}$.
The $n$-dimensional Boolean lattice, defined as the set of all subsets of $[n]$ ordered by inclusion, is denoted $\cube{n}{}$. The suborder of $\cube{n}{}$ induced by the $\ell^\textrm{th}$ and $k^\textrm{th}$ layers (i.e., $\big([n]^{(\ell)}\cup[n]^{(k)},\subseteq\big)$) is called $\cube{n}{\ell,k}$. Because the properties we care about are preserved by poset anti-isomorphisms, we will usually assume that $\ell < k$ and $\ell \leq n/2$.

Let $P = (X,\leq)$ be a poset. A partial linear extension of $P$ is a linear order $L = (Y,\leq_L)$, where $Y \subseteq X$ and, for every $x,y\in Y$, if $x\leq y$, then $x\leq_L y$. A linear extension of $P$ is a partial linear extension whose ground set is $X$. A set $\mathcal{L}$ of partial linear extensions of $P$ is called a local realiser if, for every ordered pair $(x,y)\in P^2$ with $x\not\geq y$, there is an $L\in\mathcal{L}$ such that $x\leq_L y$. A realiser of $P$ is a local realiser whose elements are all linear extensions of $P$. The order dimension of $P$, first introduced by Dushnik and Miller~\cite{dm} and denoted by $\dim{P}$, is the minimum cardinality of a realiser of $P$. Given a local realiser $\mathcal{L}$ and $x\in P$, we write $\mu_\mathcal{L}(x)$ for the number of elements of $\mathcal{L}$ whose ground sets contain $x$, which we call the \emph{multiplicity} of $x$ in $\mathcal{L}$. Then the local dimension of $P$, denoted $\ldim{P}$, is defined as the minimum of $\max\limits_{x\in P}\mu_\mathcal{L}(x)$ over all local realisers $\mathcal{L}$. Local dimension was introduced only recently by Ueckerdt~\cite{ueckerdt}, and is much less well-understood than dimension. Because $\pdim{P}$ is equal to the minimum cardinality of a local realiser of $P$ (see, e.g., Trotter~\cite{trotter}, section 1.12), $\ldim{P} \leq \pdim{P}$ for every poset $P$.

 Kim, Martin, Masa\v{r}\'{i}k, Shull, Smith, Uzzell, and Wang~\cite{localdim} proved that $\ldim{\cube{n}{}}$ is at least $\Omega\big(\frac{n}{\log n}\big)$, but so far the only upper bound we have for $\ldim{\cube{n}{}}$ is the trivial bound of $n$.

Our main result is that the local dimension of the first and middle layers of the Boolean lattice is asymptotically $\frac{n}{\log n}$.

\begin{theorem}\label{thm:main}
As $n\to\infty$, $\ldim{\cube{n}{1,\lfloor n/2\rfloor}} = \frac{n}{\log n} + O\big(\frac{n\log\log n}{(\log n)^2}\big)$.
\end{theorem}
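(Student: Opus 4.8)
The plan is to establish two matching estimates, each of the form $\frac{n}{\log n}\big(1+O(\frac{\log\log n}{\log n})\big)$. Write $k=\lfloor n/2\rfloor$. The first step is the standard reduction to critical pairs: the only ``expensive'' obligations in a local realiser of $\cube{n}{1,k}$ are, for each pair $(S,\{i\})$ with $|S|=k$ and $i\notin S$, to supply a partial linear extension $L$ with $S,\{i\}\in\mathrm{gr}(L)$ and $S<_L\{i\}$. Every other ordered incomparable pair (two singletons, two $k$-sets, or a pair $(\{i\},S)$) can be serviced cheaply, and I would handle all of them at the end by adding a bounded number of extra extensions that do not affect the leading term.

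For the upper bound I would build the extensions from antichain blocks so that legality is automatic. A generic extension will be a concatenation $\mathcal{S}_1<B_1<\mathcal{S}_2<B_2<\cdots$ in which the $B_j$ are blocks of singletons and the $\mathcal{S}_j$ are blocks of $k$-sets, ordered arbitrarily inside; the sole legality requirement is that each $S\in\mathcal{S}_j$ be disjoint from $B_j\cup B_{j+1}\cup\cdots$, and then $L$ services exactly the pairs $(S,\{i\})$ with $S\in\mathcal{S}_j$, $\{i\}\in B_{j'}$, $j'\ge j$. These extensions are to be indexed by a carefully chosen covering design on $[n]$: a family of ``small'' sets of size roughly $r$, arranged so that for every $k$-set $S$ and every $i\notin S$ some chosen set contains $i$ and avoids $S$, together with a balanced distribution of the $k$-sets among the extensions attached to a given small set. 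The two multiplicities come out to be controlled, respectively, by the covering number of the design and by the average number of extensions into which a $k$-set is placed; optimising the block size $r$ leads to the relation $r2^r\asymp n$, that is $r=\log n-\log\log n+O(1)$, and this displacement of $r$ from $\log n$ is precisely what produces the $\frac{n\log\log n}{(\log n)^2}$ second-order term on top of the main term $n/\log n$.

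For the lower bound, fix a local realiser $\mathcal{L}$ in which every element has multiplicity at most $d$, and analyse one partial linear extension $L$ at a time. Listing its present singletons in $L$-order as $\{i_1\}<_L\cdots<_L\{i_a\}$, the sets $f_t:=\{S\in\mathrm{gr}(L):S<_L\{i_t\}\}$ form a chain $f_1\subseteq\cdots\subseteq f_a$, so the number of pairs $L$ services equals $\sum_t|f_t|$, and legality forces every $S\in f_t$ to avoid $\{i_t,\dots,i_a\}$. I would then prove a standard-example-type bound: two $k$-sets that are both serviced, inside the same $L$, against many singletons produce contradictory constraints on the $L$-order of those singletons, which caps at roughly $\binom{n-u}{k}$ the number of present $k$-sets that lie below at least $u$ present singletons (here the complement symmetry of the middle layer is used). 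Localising at a fixed singleton $\{i\}$ one also gets, from the obligation to reverse every pair $(\{\ell\},\{i\})$, that the singletons lying above $\{i\}$ across $\{i\}$'s $\le d$ extensions must exhaust $[n]\setminus\{i\}$. Combining the caps for singletons and for $k$-sets, and optimising the unavoidable trade-off between ``singleton-heavy'' and ``$k$-set-heavy'' extensions, should force $d\ge\frac{n}{\log n}(1-o(1))$.

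The main obstacle is this lower bound, in particular getting the constant to be exactly $1$: a direct double count of the serviced pairs against the two multiplicity budgets is far too lossy (it only yields $d=\Omega(1)$, or $\Omega(\sqrt n)$ once one imposes the per-extension cap), so the argument has to use simultaneously the chain structure of $C(L)$, the middle-layer complement symmetry, and the fact that an extension which reverses many singleton pairs against $\{i\}$ contributes almost nothing toward covering $\{i\}$'s $k$-sets; moreover the error bookkeeping must be sharp enough to match the $\log\log n$-order slack coming from the covering design in the upper bound. A secondary point is showing the covering design can be taken balanced, which I expect to get by a deletion argument applied to a random design rather than by an explicit construction.
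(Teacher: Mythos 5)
Your upper bound is, in substance, the paper's construction: partial extensions consisting of an antichain of $k$-sets followed by an antichain of singletons, indexed by a small block of size $r$ together with a subset of that block (the paper phrases the blocks as the edge-stars of the vertices of one side of an $n$-edge bipartite graph), with the same balancing $r2^r\asymp n$, hence $r=\log n-\log\log n+O(1)$, the same source of the $\frac{n\log\log n}{(\log n)^2}$ second-order term, and two global extensions $\pi_0,\pi_1$ to dispose of the non-critical pairs. One simplification: no random covering design or deletion argument is needed. An arbitrary partition of $[n]$ into $\lceil n/r\rceil$ blocks of size at most $r$ suffices, with one extension for each block $v$ and each subset $X\subseteq v$, listing the $k$-sets whose trace on $v$ equals $X$ followed by the singletons of $v\setminus X$; then every $k$-set has multiplicity exactly the number of blocks and every singleton has multiplicity $2^{r-1}$, so the balance you hoped to obtain probabilistically is automatic.

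The lower bound is a genuine gap, which you flag yourself but do not close. The ingredients you list (the nested down-sets $f_1\subseteq\cdots\subseteq f_a$, the cap of $\binom{n-u}{k}$ on the number of $k$-sets lying below at least $u$ singletons of a single extension, and the covering obligation at each singleton) do not combine in any evident way to force the constant $1$, nor even a superconstant bound. For instance: each $k$-set $S$ must see all $n-k$ elements of $[n]\setminus S$ as singletons above it across its at most $d$ extensions, so some extension has $u\geq (n-k)/d$ singletons above $S$, and the cap says such an extension accommodates at most $\binom{n-u}{k}\approx 2^{-u}\binom{n}{k}$ $k$-sets in this role; but this only forces the realiser to contain about $2^{(n-k)/d}$ extensions, which for $d\approx n/\log n$ is merely $\sqrt{n}$ and is no contradiction, since the number of nontrivial extensions is bounded only by $d$ times the number of elements. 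The paper sidesteps this entirely with a compression argument (Theorem~\ref{thm:lkldim}): attach $m=\lfloor n(\log n-1)\rfloor$ \emph{uniformly random} $k$-sets above the first layer, so the resulting random poset has entropy at least $m\log\binom{n}{k}$; a local realiser of maximum multiplicity $d$ yields a prefix-free encoding (the Crespelle code) of length at most $d(n+m)$ over a $3(n+m)$-letter alphabet, and Shannon's source coding theorem gives $d\geq\frac{m\log\binom{n}{k}}{(n+m)\log(3(n+m))}=\frac{n}{\log n}-O\big(\frac{n\log\log n}{(\log n)^2}\big)$; a lexicographic-sum argument then transfers the bound to $\cube{n}{1,\lfloor n/2\rfloor}$ itself. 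If you insist on a purely structural proof you would need a substantially sharper accounting than the one sketched; as written, the lower half of the theorem is not proved.
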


Another dimension variant, called $t$-dimension, was introduced by Nov{\'a}k~\cite{novak}. For a poset $P$ and $t\in\N$ with $t\geq 2$, the $t$-dimension of $P$, denoted $\dim_t(P)$, is the smallest cardinal $d$ such that $P$ embeds into a product of $d$ chains of cardinality $t$. The most interesting case is $\twodim{P}$, which is the smallest $d$ such that $P$ embeds into $\cube{d}{}$ as a suborder. For example, Sperner's theorem states that, if $A_n$ is an antichain of size $n$, then $\twodim{A_n} = \min\left\{m : \binom{m}{\lfloor m/2\rfloor} \geq n\right\} = \log n + \frac{1}{2}\log\log n + O(1)$.
We also have $\twodim{\mathbf{n}} = n-1$. Clearly $\twodim{P} \leq |P|$ for every poset $P$ (send each $a\in P$ to $\{x\leq a\}$), and this bound is sharp for $n\geq 2$ (see exercise 10.2.6 in \cite{trotter}). Also, by the pigeonhole principle, $\twodim{P} \geq \lceil\log |P|\rceil$, and this bound is also sharp.

\section{Lexicographic sums}

Let $P$ be a poset with ground set $X$ and, for each $x\in X$, let $Q_x$ be a poset with ground set $Y_x$. The lexicographic sum of $\{Q_x\}$ over $P$, denoted $\sum\limits_{x\in P} Q_x$, is a poset on the ground set $\{(x,y) : x\in X, y\in Y_x\}$ where $(x,y) \leq (z,w)$ if and only if either $x < z$ or $x=z$ and $y\leq w$. Hiraguchi~\cite{hiraguchi} proved that
\[\pdim{\sum\limits_{x\in X} Q_x} = \max\big\{\pdim{P},\max\{\pdim{Q_x} : x\in X\}\big\}.\]
We don't have such a simple equation for local dimension, but we can prove some weaker inequalities.

\begin{proposition}\label{prop:lex}
For any poset $P$ with ground set $X$ and any family $\{Q_x\}_{x\in X}$ of nonempty posets indexed by $X$, we have the following inequalities:
\begin{align}
\label{ineq:lex1}
\ldim{\sum\limits_{x\in X} Q_x}\geq \max\big\{\ldim{P},\max\{\ldim{Q_x} : x\in X\}\big\},\\
\label{ineq:lex2}
\ldim{\sum\limits_{x\in X} Q_x} \leq
\max\big\{\ldim{P},\max\{\pdim{Q_x} : x\in X\}\big\},\\
\label{ineq:lex3}
\ldim{\sum\limits_{x\in X} Q_x} \leq \ldim{P} + \max\left\{\ldim{Q_x} : x\in X\right\}
.
\end{align}
\end{proposition}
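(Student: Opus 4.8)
The plan is to reduce all three inequalities to two ingredients: monotonicity of local dimension under passing to induced suborders (which yields \eqref{ineq:lex1}), and a ``blow-up'' construction that turns a local realiser of $P$ into one of $Q := \sum_{x\in X}Q_x$ by replacing each occurrence of $x$ by a suitably ordered copy of $Q_x$ (which yields \eqref{ineq:lex2} and \eqref{ineq:lex3}); write $Y_x$ for the ground set of $Q_x$. The monotonicity fact I need is the following: if $A$ is a subset of a poset $R$ with local realiser $\mathcal{L}$, then restricting every $L\in\mathcal{L}$ to $A$ (intersected with the ground set of $L$) gives a local realiser of the induced suborder $R[A]$ without increasing any multiplicity, so $\ldim{R[A]}\le\ldim{R}$. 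Granting this, \eqref{ineq:lex1} is immediate: the map $y\mapsto(x,y)$ identifies $Q_x$ with $Q[\{x\}\times Y_x]$, so $\ldim{Q_x}\le\ldim{Q}$; and choosing one representative $y_x\in Y_x$ for each $x\in X$, the suborder $Q[\{(x,y_x):x\in X\}]$ is isomorphic to $P$ — two distinct elements $(x,y_x),(z,y_z)$ are comparable in $Q$ precisely when $x,z$ are comparable in $P$ — so $\ldim{P}\le\ldim{Q}$.

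For \eqref{ineq:lex3} I would fix a local realiser $\mathcal{L}_P$ of $P$ of maximum multiplicity $\ldim{P}$, a local realiser $\mathcal{L}_x$ of each $Q_x$ of maximum multiplicity $\ldim{Q_x}$, and an arbitrary linear extension $\ell_x$ of each $Q_x$. To each $L\in\mathcal{L}_P$, with ground set $Z_L$, associate the linear order $\hat{L}$ on $\bigcup_{x\in Z_L}\{x\}\times Y_x$ given by $(x,y)<(z,w)$ iff $x<_L z$, or $x=z$ and $y<_{\ell_x}w$; since $L$ extends $P$ and each $\ell_x$ extends $Q_x$, this $\hat{L}$ is a partial linear extension of $Q$. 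To each $x\in X$ and each $M\in\mathcal{L}_x$, with ground set $W$, associate the partial linear extension $\tilde{M}$ of $Q$ on $\{x\}\times W$ that copies the order of $M$. Then $\{\hat{L}:L\in\mathcal{L}_P\}\cup\bigcup_{x\in X}\{\tilde{M}:M\in\mathcal{L}_x\}$ is a local realiser of $Q$: a pair $(x,y)\not\geq(z,w)$ with $x\ne z$ forces $x\not\geq_P z$, so some $L$ has $x<_L z$ and hence $(x,y)<_{\hat{L}}(z,w)$; and a pair $(x,y)\not\geq(x,w)$ forces $y\not\geq_{Q_x}w$, witnessed by some $\tilde{M}$. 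Since $(x,y)$ occurs in $\mu_{\mathcal{L}_P}(x)\le\ldim{P}$ of the $\hat{L}$'s, in $\mu_{\mathcal{L}_x}(y)\le\ldim{Q_x}$ of the $\tilde{M}$'s, and in no other extension, its multiplicity is at most $\ldim{P}+\max_{x\in X}\ldim{Q_x}$.

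For \eqref{ineq:lex2} I run the same blow-up, but let the block copies themselves carry a \emph{realiser} of each $Q_x$ rather than adding the $\tilde{M}$'s separately. Fix a realiser $R_x^1,\dots,R_x^{d_x}$ of $Q_x$ with $d_x=\pdim{Q_x}$, and a local realiser $\mathcal{L}_P$ of $P$ of maximum multiplicity $\ldim{P}$; the case $|P|=1$ is trivial since then $Q=Q_x$, so assume $|P|\geq2$, which forces every $x\in X$ to lie in some extension of $\mathcal{L}_P$, i.e. $\mu_{\mathcal{L}_P}(x)\geq1$. When forming $\hat{L}$ for $L\in\mathcal{L}_P$, order the block of each $x\in Z_L$ by one of the $R_x^i$, distributing the indices so that across the $\mu_{\mathcal{L}_P}(x)$ extensions of $\mathcal{L}_P$ containing $x$ we use $\min\{\mu_{\mathcal{L}_P}(x),d_x\}$ distinct indices; then, for each $x$, add the remaining $\max\{0,\,d_x-\mu_{\mathcal{L}_P}(x)\}$ realisers $R_x^i$ as partial linear extensions of $Q$ supported on $\{x\}\times Y_x$. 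This is again a local realiser — between-block incomparabilities are handled by the $\hat{L}$'s, and within-block ones because every $R_x^i$ now appears somewhere — and the multiplicity of $(x,y)$ equals $\mu_{\mathcal{L}_P}(x)+\max\{0,\,d_x-\mu_{\mathcal{L}_P}(x)\}=\max\{\mu_{\mathcal{L}_P}(x),d_x\}$, which is at most $\max\{\ldim{P},\max_{x\in X}\pdim{Q_x}\}$.

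The routine parts are checking that the displayed families consist of partial linear extensions of $Q$ and satisfy the local-realiser condition. The step that needs genuine care is the index bookkeeping for \eqref{ineq:lex2}: one must arrange that every extension in the chosen realiser of each $Q_x$ really is used somewhere (so all within-block incomparabilities get realised), while the slots donated by $\mathcal{L}_P$ and the leftover extensions never both land on the same element — this is exactly what makes the final multiplicity the ``max'' rather than the ``sum''. The only other thing to watch is the degenerate case $|P|=1$.
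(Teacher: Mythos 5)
Your proof is correct and follows essentially the same route as the paper: inequality \eqref{ineq:lex1} by restriction to induced suborders, \eqref{ineq:lex3} by blowing up each $x$ with a fixed linear extension of $Q_x$ and adding the local realisers of the $Q_x$ as separate blocks, and \eqref{ineq:lex2} by distributing the extensions of a realiser of $Q_x$ over the occurrences of $x$ and appending any leftovers, giving the $\max$ rather than the sum. Your explicit handling of the bookkeeping and of the case $\mu_{\mathcal{L}_P}(x)=0$ is slightly more careful than the paper's, but the construction is the same.
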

\begin{proof}
Inequality~\ref{ineq:lex1} follows trivially from the fact that $\sum\limits_{x\in P} Q_x$ has suborders isomorphic to $P$ and to $Q_x$ for each $x\in X$.

To prove inequality~\ref{ineq:lex2}, let $\mathcal{L}$ be a local realiser of $P$ and, for each $x\in X$, let $\mathcal{M}_x$ be a realiser of $Q_x$. For convenience, we regard partial linear orders as lists rather than posets. We will construct a local realiser of $\sum\limits_{x\in P} Q_x$ as follows. For each $x\in X$, if $|\mathcal{M}_x| \leq \mu_\mathcal{L}(x)$, replace each occurrence of $x$ in the lists in $\mathcal{L}$ with $x\times M$ for some $M\in\mathcal{M}_x$, using each list in $\mathcal{M}_x$ at least once. If $\mu_\mathcal{L}(x) < |\mathcal{M}_x|$, replace each occurrence of $x$ in the lists in $\mathcal{L}$ with $x\times M$ for some $M\in\mathcal{M}_x$, using a different list $M$ for each occurrence. Then, for each unused $M\in\mathcal{M}_x$, add $x\times M$ as a new list. Let $\mathcal{N}$ be the set of all lists thus constructed. For each $x\in X$ and each $y\in Y_x$, we can see that $\mu_\mathcal{N}(x,y) = \max\left\{\mu_\mathcal{L}(x),|\mathcal{M}_x|\right\}$ by counting the number of occurrences of $(x,y)$ in these lists. To show that $\mathcal{N}$ is a local realiser, suppose $(x,y) \not\geq (x',y')$. Then either $x=x'$ and $y\not\geq y'$ or $x\not\geq x'$. If the former, then there is some $M\in\mathcal{M}_x$ such that $y$ occurs before $y'$ in $M$, and $x\times M$ is a sublist of some list in $\mathcal{N}$. If the latter, then there is some $L\in\mathcal{L}$ in which $x$ occurs before $x'$. In the corresponding element of $\mathcal{N}$, $x$ and $x'$ have been replaced by sublists containing $(x,y)$ and $(x',y')$ respectively, so $(x,y)$  occurs before $(x',y')$ in this list.

To prove inequality~\ref{ineq:lex3}, let $\mathcal{L}$ be a local realiser of $P$ and, for each $x\in X$, let $\mathcal{M}_x$ be a local realiser of $Q_x$. For each $x\in X$, let $K_x$ be an arbitrary linear extension of $Q_x$. Let $\mathcal{N}$ be the set of all lists obtained by replacing, for each $x\in X$, each occurrence of $x$ in the linear orders in $\mathcal{L}$ with $x\times K_x$ as well as all lists of the form $x\times M$ with $x\in X$ and $M\in\mathcal{M}_x$. This set is a local realiser of $\sum\limits_{x\in X} Q_x$, and, for every $x\in X$ and $y\in Y_x$, $\mu_\mathcal{N}(x,y) = \mu_\mathcal{L}(x) + \mu_{\mathcal{M}_x}(y)$. The special case where $P$ is an antichain was stated as an exercise by Bosek, Grytczuk, and Trotter in \cite{planar}.
\end{proof}

\section{Lower bounds}

Wang Zhiyu~\cite{zhiyu}, following a comment by Christophe Crespelle in \cite{localdim}, suggested that an information entropy method might help improve the bounds on $\ldim{\cube{n}{}}$.

Let $X$ be a discrete random variable taking values in $\{x_1,x_2,\dots,x_n\}$ with $\mathbb{P}\{X = x_i\} = p_i$ for each $i$. Then the entropy of $X$, denoted $H(X)$, is defined by the formula
\[H(X) = -\sum\limits_{\substack{1\leq i\leq n \\ p_i\neq 0}}p_i\log p_i.\]
It can be shown using the strict concavity of $\log$ that $H(X) \leq \log n$, and that this bound is only attained when $X$ is uniformly distributed. Entropy was introduced by Shannon~\cite{shannon}, and $H(X)$ can be thought of as the average amount of information in bits obtained be observing the value of $X$. Shannon's fundamental theorem for a noiseless channel, also known as the source coding theorem, makes this precise. Before stating the theorem, we need a few definitions. An \emph{alphabet} is a finite set with two or more elements, and the elements of an alphabet are called symbols. Given an alphabet $\Omega$, a \emph{word} over $\Omega$ is a finite sequence of symbols in $\Omega$, and $\Omega^\star$ denotes the set of all words over $\Omega$. Given a finite set $\Sigma$ and an alphabet $\Omega$, a \emph{prefix-free code} is a map $C$ from $\Sigma$ to $\Omega^\star$ such that, for all $x,y\in\Sigma$, if $x\neq y$, then $C(x)$ is not an initial segment of $C(y)$.

\begin{theorem}
\label{thm:shannon}
Let $X$ be a random variable taking values in a finite set $\Sigma$ and let $\Omega$ be a finite alphabet. For every prefix-free code $C$, the expected length of $C(X)$ is at least $\frac{H(X)}{\log|\Omega|}$. Conversely, there exists a prefix-free code $C$ such that the expected length of $C(X)$ is at most $\frac{H(X)}{\log|\Omega|}+1$.\hfill\qedsymbol
\end{theorem}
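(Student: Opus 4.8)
The plan is to reduce the theorem to Kraft's inequality and then apply two routine estimates. Write $D=|\Omega|\geq 2$, and for a code $C$ abbreviate $\ell_x=|C(x)|$, the length of the word $C(x)$. The key lemma is Kraft's inequality, in both directions: \emph{a family of positive integers $(\ell_x)_{x\in\Sigma}$ equals $(|C(x)|)_{x\in\Sigma}$ for some prefix-free code $C\colon\Sigma\to\Omega^\star$ if and only if $\sum_{x\in\Sigma}D^{-\ell_x}\leq 1$}. To prove this I would identify each word over $\Omega$ with a node of the infinite rooted $D$-ary tree (the empty word being the root, the children of a node $u$ being the extensions $u\omega$ with $\omega\in\Omega$), so that ``$u$ is an initial segment of $v$'' becomes ``$u$ is an ancestor of $v$''; a prefix-free code then becomes a set of pairwise incomparable nodes. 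For the forward direction, let $L=\max_x\ell_x$: the node $C(x)$ has exactly $D^{L-\ell_x}$ descendants at depth $L$, and by prefix-freeness these descendant sets are pairwise disjoint, so $\sum_x D^{L-\ell_x}\leq D^L$. For the converse I would list the symbols as $x_1,\dots,x_n$ (where $n=|\Sigma|$) with $\ell_{x_1}\leq\cdots\leq\ell_{x_n}$ and choose codewords greedily: once $C(x_1),\dots,C(x_{j-1})$ have been chosen, a depth-$\ell_{x_j}$ node is forbidden exactly when it lies below or equals some earlier $C(x_i)$, and the number of such nodes is at most $\sum_{i<j}D^{\ell_{x_j}-\ell_{x_i}}=D^{\ell_{x_j}}\sum_{i<j}D^{-\ell_{x_i}}\leq D^{\ell_{x_j}}(1-D^{-\ell_{x_j}})<D^{\ell_{x_j}}$, so an allowed node remains and may be taken as $C(x_j)$.

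Granting Kraft's inequality, the lower bound follows from the nonnegativity of relative entropy. Let $C$ be prefix-free, put $S=\sum_x D^{-\ell_x}\leq 1$, and let $q_x=D^{-\ell_x}/S$, which is a probability distribution on $\Sigma$. Writing $\ell_x=-\log_D(D^{-\ell_x})=-\log_D q_x-\log_D S$, we get
\[
\Expect|C(X)|=\sum_x p_x\ell_x=-\sum_x p_x\log_D q_x-\log_D S\geq -\sum_x p_x\log_D p_x=\frac{H(X)}{\log D},
\]
where $-\log_D S\geq 0$ because $S\leq 1$, and $\sum_x p_x\log_D(p_x/q_x)\geq 0$ by the concavity of the logarithm (Gibbs' inequality); the final equality is just $\log_D p_x=\log p_x/\log D$, which together with $|\Omega|=D$ gives the claimed bound.

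For the converse assertion I would use the Shannon code: for each $x$ with $p_x>0$ set $\ell_x=\lceil-\log_D p_x\rceil$ (symbols of probability $0$ can be assigned arbitrary sufficiently long codewords, which affects neither prefix-freeness nor the expected length). Then $\sum_x D^{-\ell_x}\leq\sum_x D^{\log_D p_x}=\sum_x p_x=1$, so by the converse half of Kraft's inequality there is a prefix-free code $C$ with $|C(x)|=\ell_x$ for all $x$; and since $\ell_x<-\log_D p_x+1$, this code has $\Expect|C(X)|=\sum_x p_x\ell_x<\sum_x p_x(-\log_D p_x+1)=\frac{H(X)}{\log D}+1$.

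The one point that needs care, and hence the main obstacle, is the $D$-ary tree bookkeeping in Kraft's inequality, in particular the greedy step of the converse: one must check both that a freshly chosen depth-$\ell_{x_j}$ node is incomparable with \emph{every} previously chosen codeword, and that the sets of forbidden nodes are counted without overlap. The first holds because the lengths are processed in nondecreasing order, so no earlier codeword can be a descendant of the new node; the second holds because a node lying below two distinct earlier codewords would force those two to be comparable, contradicting what has already been built. Once Kraft's inequality is established, the remainder is a one-line convexity argument for the lower bound and a one-line rounding estimate for the upper bound.
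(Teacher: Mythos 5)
The paper does not prove this statement at all: it is quoted as Shannon's classical source coding theorem (the \qedsymbol after the statement marks it as a cited result, with \cite{shannon} given in the surrounding text), so there is nothing in the paper to compare your argument against line by line. Your proof is the standard textbook one --- Kraft's inequality in both directions via the $D$-ary tree, Gibbs' inequality for the lower bound, and the Shannon code $\ell_x=\lceil -\log_D p_x\rceil$ for the upper bound --- and it is correct. The tree bookkeeping you flag as the delicate point is handled properly: counting depth-$L$ descendants gives the forward Kraft inequality, and processing lengths in nondecreasing order makes the greedy construction work for the converse.

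One pedantic edge case: under the paper's definition of a prefix-free code (every word, including a codeword itself, is an initial segment of itself), codeword lengths must be at least $1$ whenever $|\Sigma|\geq 2$, but your recipe gives $\ell_x=\lceil -\log_D p_x\rceil=0$ when $p_x=1$. Replacing $\ell_x$ by $\max\{1,\lceil -\log_D p_x\rceil\}$ fixes this without affecting the bound, since in that degenerate case $H(X)=0$ and the expected length is exactly $1\leq \frac{H(X)}{\log|\Omega|}+1$. Everything else, including your handling of zero-probability symbols, is fine.
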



In a note added to the end of \cite{localdim}, Crespelle suggested a method of encoding an arbitrary poset on a fixed ground set, which we call the Crespelle code.
Given a ground set $X$ with cardinality $n$ and a poset $P$ on $X$, we encode $P$ as a word over a $3n$-symbol alphabet $\Omega$ as follows. Let $\Omega = \{x_i,x_m,x_f:x\in X\}$. Given a local realiser $\mathcal{L}$ of $P$, we write each nontrivial\footnote{Of course, if we remove all the one-element partial extensions from a local realiser, the resulting set is still a local realiser.} partial extension in $\mathcal{L}$ as a list of elements of $X$, using symbols of the form $x_i$ at the beginning of each list and symbols of the form $x_m$ everywhere else. Then we concatenate these lists in any order and replace the last symbol $x_m$ with the corresponding $x_f$. We call the resulting word a \emph{Crespelle codeword} for $P$. Note that this code is prefix-free by construction. If $\ldim{P} = d$, then $P$ has a local realiser $\mathcal{L}$ for which each element of $X$ has multiplicity at most $d$, and the Crespelle codeword for $P$ constructed from $\mathcal{L}$ has length at most $dn$.


Kleitman and Rothschild~\cite{kleitmanrothschild} proved that the entropy of a uniformly random partial order on $[n]$ is $(\frac{1}{4}+o(1))n^2$. Kim, Martin, Masa\v{r}\'{i}k, Shull, Smith, Uzzell, and Wang~\cite{localdim} proved that the maximum local dimension of a poset on $n$ points is at between $\left(\frac{1}{4e\ln 2}-o(1)\right)\frac{n}{\log n}$ and $(4+o(1))\frac{n}{\log n}$ $n\to\infty$, so every partial order on $[n]$ has a Crespelle codeword with at most $(4+o(1))n^2$ bits. As Crespelle observed, this means that the Crespelle code is optimal up to a constant factor.

We can use this fact improve the lower bound on the maximum local dimension of an $n$-element poset.

\begin{theorem}\label{thm:avgldim}
As $n\to\infty$, the expected local dimension of a poset chosen uniformly at random from the set of all $n$-element labelled posets is at least
\[\left(\frac{1}{4}-o(1)\right)\frac{n}{\log n}.\]
\end{theorem}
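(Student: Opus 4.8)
The plan is to combine three ingredients already laid out in the excerpt: the Kleitman--Rothschild entropy estimate, the lower-bound half of Shannon's source coding theorem (\Cref{thm:shannon}), and the length bound on Crespelle codewords. Let $P_n$ be a uniformly random labelled poset on $[n]$, and let $d_n = \ldim{P_n}$, which is a random variable. For each fixed value of $d$, a poset $P$ with $\ldim{P}=d$ admits a Crespelle codeword of length at most $dn$ over the $3n$-symbol alphabet $\Omega$. Fixing once and for all, for each poset $P$, a choice of optimal local realiser and hence a Crespelle codeword $C(P)$, we obtain a prefix-free code $C$ (prefix-freeness is guaranteed by construction, as noted in the excerpt) on the finite set $\Sigma$ of all labelled posets on $[n]$, with $|C(P)| \le \ldim{P}\cdot n$ for every $P$.

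Next I would apply the converse-free direction of \Cref{thm:shannon} to the random variable $P_n$ with this code $C$: the expected length of $C(P_n)$ is at least $H(P_n)/\log|\Omega| = H(P_n)/\log(3n)$. On the other hand, by the length bound, $\Expect|C(P_n)| \le n\cdot\Expect[\ldim{P_n}]$. Combining,
\[
\Expect[\ldim{P_n}] \;\ge\; \frac{H(P_n)}{n\log(3n)}.
\]
Now I would invoke Kleitman--Rothschild: since $P_n$ is uniform over the set of labelled posets on $[n]$, $H(P_n) = \log(\text{number of labelled posets on }[n]) = (\tfrac14+o(1))n^2$. Substituting and noting $\log(3n) = \log n + \log 3 = (1+o(1))\log n$, we get
\[
\Expect[\ldim{P_n}] \;\ge\; \frac{(\tfrac14+o(1))n^2}{n(1+o(1))\log n} \;=\; \left(\tfrac14-o(1)\right)\frac{n}{\log n},
\]
which is the claimed bound.

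The steps are all routine once the setup is fixed; the only point requiring a little care is the measurability/well-definedness of the code $C$ — one must commit to a single Crespelle codeword per poset before applying Shannon's theorem, but since $\Sigma$ is finite this is immediate. A secondary subtlety is matching the Kleitman--Rothschild statement (phrased in the excerpt as "the entropy of a uniformly random partial order on $[n]$ is $(\tfrac14+o(1))n^2$") to what is needed here; these are literally the same quantity $H(P_n)$, so no translation is needed. I would also remark that the same argument with the true alphabet size is harmless: even taking $|\Omega|=3n$ rather than attempting to shrink it costs only a $1+o(1)$ factor in the denominator, which is absorbed into the $o(1)$. So there is no real obstacle here — the theorem is essentially a clean repackaging of Crespelle's observation that his code is information-theoretically near-optimal, read in the reverse direction to extract a lower bound on the typical (hence maximum) local dimension.
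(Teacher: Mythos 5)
Your proposal is correct and follows essentially the same route as the paper: bound the expected Crespelle codeword length above by $n\cdot\Expect[\ldim{P}]$ and below by $H(P)/\log(3n)$ via Theorem~\ref{thm:shannon}, then plug in $H(P)\ge(\tfrac14-o(1))n^2$. The only cosmetic difference is that you cite Kleitman--Rothschild for the entropy, while the paper derives the needed lower bound $H(P)\ge\tfrac14 n^2$ directly by counting two-layer posets.
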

\begin{proof}
Suppose $n\geq 2$ and let $P$ be a random partial order on $[n]$, where we assign equal probability to each partial order. Assume for convenience that $n$ is even. The total number of such partial orders is at least the number of two-layer partial orders with minimum elements $1,2,3,\dots, n/2$ and maximal elements $n/2+1, n/2+2,\dots,n$, which is equal to $2^{\frac{1}{4}n^2}$. It follows that $H(P) \geq \frac{1}{4}n^2$. It is not much harder to show that this is also true when $n$ is odd.

Let $d = \Expect[\ldim{P}]$.
Then the expected length of the shortest Crespelle codeword for $P$ is at most $dn$. Hence by Theorem~\ref{thm:shannon} $H(P) \leq dn\log 3n$, so
\[
d \geq \frac{n}{4\log 3n}.
\]
\end{proof}

A similar argument shows that the expected $2$-dimension of a random partial order on $[n]$ is at least $\frac{1}{4}n-o(1)$ as $n\to\infty$. First, we define a prefix-free binary code for the set of all partial orders on $[n]$ as follows. Suppose $P$ is a partial order on $[n]$ with $2$-dimension $d$. Fix a poset embedding $f$ from $P$ into $\cube{d}{}$. The codeword for $P$ consists of a block of $\lceil\log n\rceil$ bits representing $d$ as a binary number, followed by $n$ blocks of $d$ bits each, where the $i^\textrm{th}$ block is the representation of $f(i)$ as a binary string. The length of this word is $\lceil\log n\rceil + dn$ bits. Now let $P$ be a uniformly random partial order on $[n]$, as defined in Theorem~\ref{thm:avgldim}. Repeating the proof of Theorem~\ref{thm:avgldim} using this binary code instead of the Crespelle code, we find that $\Expect[\twodim{P}] \geq \frac{1}{4}n-\frac{\lceil\log n\rceil}{n}$ for all $n\geq 2$. By essentially the same argument, we have $\Expect[\dim_t\left(P\right)] \geq \frac{1}{4\log t}n - \frac{\lceil\log_t n\rceil}{n}$ for every $t \geq 2$ and $n\geq 2$.

It's clear from the proofs that the same results hold (up to an additive $o(1)$ term) for a uniformly random two-level poset with minimum elements $1,2,\dots,\lfloor n/2\rfloor$ and maximum elements $\lfloor n/2\rfloor+1, \lfloor n/2\rfloor+2,\dots, n$.

The following lower bound applies to any pair of layers in the Boolean lattice.
\begin{theorem}\label{thm:lkldim}
For any $\ell,k < n$,
\[
\ldim{\cube{n}{\ell,k}} \geq \frac{\log\binom{n}{k}}{\log\binom{n}{\ell}} - \frac{\log\binom{n}{k}}{\big(\log\binom{n}{\ell}\big)^2}\left(\log\log\binom{n}{\ell}+c\right),
\]
where $c \leq \log 12 < 3.585$. In particular,
\[
\ldim{\cube{n}{1,k}} \geq \left(\frac{1}{\log n} - O\left(\frac{\log\log n}{(\log n)^2}\right)\right)\log\binom{n}{k}.
\]
\end{theorem}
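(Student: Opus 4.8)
The plan is to encode each element of $\cube{n}{\ell,k}$ by its "down-set data" within a local realiser and apply Shannon's source coding theorem (Theorem~\ref{thm:shannon}) to the resulting prefix-free code, much as in the proof of Theorem~\ref{thm:avgldim}, but with the roles played differently. Fix a local realiser $\mathcal{L}$ of $\cube{n}{\ell,k}$ in which every element has multiplicity at most $d := \ldim{\cube{n}{\ell,k}}$. The key observation is that a $k$-set $B\in[n]^{(k)}$ is determined by the set of $\ell$-sets below it, i.e.\ by $\{A\in[n]^{(\ell)} : A\subseteq B\}$, and that for each such $A$ there is some $L\in\mathcal{L}$ with $A\leq_L B$; conversely, for $A\not\subseteq B$ there is some $L'$ with $B\leq_{L'} A$. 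So I would like to record, for each $B$, enough information about where $B$ sits in the lists of $\mathcal{L}$ to recover $B$. Concretely: build a code that writes down, for each list $L\in\mathcal{L}$ containing $B$, the position of $B$ in $L$. Since $B$ appears in at most $d$ lists, and the lists have total length $O(n\binom{n}{\ell} + n\binom{n}{k})$ hmm — I need to be more careful about the ambient size, so let me instead use the Crespelle-style idea directly.

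Here is the cleaner route. Let $N = \binom{n}{\ell} + \binom{n}{k}$ be the size of $\cube{n}{\ell,k}$. Apply the Crespelle code: the poset $\cube{n}{\ell,k}$ has a Crespelle codeword of length at most $dN$ over a $3N$-symbol alphabet. But this is wasteful. Instead I would restrict attention to recovering the $k$-layer only. Sort the elements so that within each list $L\in\mathcal{L}$ we only mark occurrences of $\ell$-sets. For a fixed $k$-set $B$, consider the tuple recording, for each of its (at most $d$) occurrences, which list it is in and its rank among the $\ell$-sets preceding it in that list; together with the reverse information this pins down exactly which $A$'s satisfy $A\subseteq B$, hence pins down $B$. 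The number of bits needed per occurrence is $\log|\mathcal{L}| + \log\binom{n}{\ell} + O(1)$, but $|\mathcal{L}|$ can be bounded by noting we only need lists that actually help; in the worst case $|\mathcal{L}| = O(\binom{n}{\ell}^2)$ or so, contributing only lower-order terms. Summing over the $d$ occurrences, each $k$-set gets a codeword of length roughly $d(\log\binom{n}{\ell} + O(\log\log\binom{n}{\ell}))$ bits. Since the $\binom{n}{k}$ distinct $k$-sets get distinct codewords from a prefix-free code (prefix-freeness is arranged by the Crespelle self-delimiting trick, padding each block), we need $\log\binom{n}{k} \leq d(\log\binom{n}{\ell} + O(\log\log\binom{n}{\ell}))$, which rearranges to the claimed bound once the additive terms are tracked; the constant $c$ comes from bundling the $O(1)$ per-occurrence overhead and the alphabet factor $\log(3\cdot\text{something})$, and $\log 12$ is what falls out when one is slightly generous.

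To make the constant $c\leq\log 12$ precise I would avoid Shannon's theorem in favour of a direct counting/injectivity argument: exhibit an explicit injection from $[n]^{(k)}$ into the set of words of length $d$ over an alphabet of size at most $12\log\binom{n}{\ell}\cdot\binom{n}{\ell}$ (the factor $12$ absorbing the constant-many "marker" choices and the logarithmic rank field encoded crudely), giving $\binom{n}{k}\leq\big(12\binom{n}{\ell}\log\binom{n}{\ell}\big)^{d}$, i.e. $d\geq \log\binom{n}{k}\big/\big(\log\binom{n}{\ell}+\log\log\binom{n}{\ell}+\log 12\big)$, and then the stated inequality follows from the elementary estimate $\frac{1}{a+b}\geq \frac{1}{a}-\frac{b}{a^2}$ with $a=\log\binom{n}{\ell}$ and $b=\log\log\binom{n}{\ell}+c$. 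The "in particular" clause for $\ell=1$ is then immediate since $\log\binom{n}{1}=\log n$.

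The main obstacle I anticipate is pinning down the alphabet size (equivalently the per-occurrence bit cost) tightly enough to land the constant at $\log 12$ rather than something larger: one must be economical about how the list-index and the rank-within-list are encoded, and argue that one genuinely needs only $O(\log\binom{n}{\ell})$-many distinguishable "states" per occurrence of $B$, not $O(\binom{n}{\ell})$-many. The trick will be that $B$ is determined not by the exact positions of its $\leq d$ occurrences but only by the induced partition of $[n]^{(\ell)}$ into "below $B$" versus "not below $B$", and for each occurrence the relevant data is just a threshold within one list — so a rank, which costs a logarithm, not a full coordinate. Getting the bookkeeping for prefix-freeness to not inflate this is the other fiddly point, handled exactly as in Crespelle's construction by reserving final-symbol markers.
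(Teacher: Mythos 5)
Your overall strategy---turn a local realiser of $\cube{n}{\ell,k}$ with maximum multiplicity $d$ into a short description of each $k$-set and then count---is a genuinely different route from the paper's. The paper instead builds a \emph{random} auxiliary two-level poset (the full $\ell$-layer below $m=\lfloor\binom{n}{\ell}(\log\binom{n}{\ell}-1)\rfloor$ independently chosen random $k$-sets), lower-bounds its entropy by $m\log\binom{n}{k}$, upper-bounds the entropy by the expected Crespelle codeword length via Theorem~\ref{thm:shannon}, and transfers the result back to $\cube{n}{\ell,k}$ via Proposition~\ref{prop:lex}. The crucial effect of taking $m\gg\binom{n}{\ell}$ top elements is amortisation: the symbols spent on the $\ell$-layer cost $o(1)$ per top element, and each symbol costs only $\log\bigl(3\binom{n}{\ell}+3m\bigr)=\log\binom{n}{\ell}+\log\log\binom{n}{\ell}+O(1)$ bits because the \emph{auxiliary} poset has only $\binom{n}{\ell}+m$ elements. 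Your plan works on the genuine poset $\cube{n}{\ell,k}$, where no such amortisation is available, and that is exactly where it breaks.

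Concretely, the gap is the claim that each occurrence of a $k$-set $B$ can be recorded with $\log\binom{n}{\ell}+O\bigl(\log\log\binom{n}{\ell}\bigr)$ bits. The data you need per occurrence is ``which list, and where in it,'' and a priori $|\mathcal{L}|$ is not bounded by any function of $\binom{n}{\ell}$: the realiser lives on a ground set containing $\binom{n}{k}$ top elements, so there can be $\Theta\bigl(d\binom{n}{k}\bigr)$ nontrivial lists, and the list index alone can cost $\Theta\bigl(\log\binom{n}{k}\bigr)$ bits, which destroys the bound entirely. Even your own estimate $|\mathcal{L}|=O\bigl(\binom{n}{\ell}^2\bigr)$ would contribute $2\log\binom{n}{\ell}$ bits per occurrence---a \emph{leading-order} term, not a lower-order one---which would triple the denominator and lose the constant $1$ in front of $\log\binom{n}{k}\big/\log\binom{n}{\ell}$. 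You flag this yourself as the main obstacle, but the injection into words of length $d$ over an alphabet of size $12\binom{n}{\ell}\log\binom{n}{\ell}$ is asserted rather than constructed, and I do not see how to construct it: reducing the per-occurrence data to ``a threshold among the $\ell$-sets of one list'' still requires identifying that list. (Your decoding observation---that $A\subseteq B$ iff $A$ precedes $B$ in some list and never follows it---is correct, and the final step $\frac{1}{a+b}\geq\frac{1}{a}-\frac{b}{a^2}$ is fine; the missing piece is the encoding itself.) To repair the argument you essentially need the paper's device of replacing the huge $k$-layer by a set of about $\binom{n}{\ell}\log\binom{n}{\ell}$ (random) top elements before encoding anything, together with Proposition~\ref{prop:lex} to pass back to $\cube{n}{\ell,k}$.
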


\begin{proof}
Assume $\ell < k$ and let $P$ be a random two-level poset defined as follows. Let $A = [n]^{(\ell)}$ and $B = [m]$. For each $b\in B$, choose a random subset $X_b$ of $[n]$ of cardinality $k$, and then let $P$ be the two-level poset on $A\dot{\cup} B$ where each $b\in B$ is above $a\in A$ if $a\subseteq X_b$.
Because the entropy of the joint distribution of mutually independent random variables is the sum of the entropies of those random variables, $H(P)$ is at least $m\log\binom{n}{k}$.
Now define $d = \mathbb{E}[\ldim{P}]$. The expected length of the shortest Crespelle codeword for $P$ is at most $d \left(\binom{n}{\ell}+m\right)$. Hence by Theorem~\ref{thm:shannon} $H(P) \leq d\left(\binom{n}{\ell}+m\right)\left(\log\left(3\binom{n}{\ell}+3m\right)\right)$, so
\[
d \geq \frac{m\log\binom{n}{k}}{\left(\binom{n}{\ell}+m\right)\left(\log\left(3\binom{n}{\ell}+3m\right)\right)}
.
\]
If we set $m = \left\lfloor\binom{n}{\ell}\left(\log\binom{n}{\ell}-1\right)\right\rfloor$, then
\[
d \geq \frac{\log\binom{n}{k}}{\log\binom{n}{\ell}} - \frac{\log\binom{n}{k}}{\big(\log\binom{n}{\ell}\big)^2}\left(\log\log\binom{n}{\ell}+\log 6+\binom{n}{\ell}^{-1}\right).
\]
Now assume $\ldim{P}\geq d$ (which occurs with nonzero probability) and modify $P$ as follows to obtain a new poset $P'$. For each subset $S$ of $A$, if there is more than one vertex in $B$ whose neighbourhood is $S$, delete all but one of them. Since $P$ is a lexicographic sum of antichains over $P'$ and $P'$ is not a chain, by Proposition~\ref{prop:lex}, $P$ and $P'$ have the same local dimension. Since $P'$ embeds into $\cube{n}{\ell,k}$, $\ldim{\cube{n}{\ell,k}} \geq \ldim{P'} \geq d$.
A similar argument works when $k<\ell$.
\end{proof}

This has the following immediate corollary.

\begin{corollary}\label{cor:alpha}
For any $\alpha \in [0,1]$, as $n\to\infty$, $\ldim{\cube{n}{1,\lfloor n^\alpha\rfloor}} \geq (1-\alpha)n^\alpha - O\big(\frac{n^\alpha\log\log n}{(\log n)^2}\big)$, and the same is true for $\cube{n}{1,(n-\lfloor n^\alpha\rfloor)}$.

\hfill\qedsymbol
\end{corollary}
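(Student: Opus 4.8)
The plan is to specialize Theorem~\ref{thm:lkldim} to $\ell = 1$ and $k = \lfloor n^\alpha\rfloor$ and then do bookkeeping. The ``in particular'' clause of Theorem~\ref{thm:lkldim} already reads
\[
\ldim{\cube{n}{1,k}} \geq \left(\frac{1}{\log n} - O\left(\frac{\log\log n}{(\log n)^2}\right)\right)\log\binom{n}{k},
\]
and the coefficient in front of $\log\binom{n}{k}$ is positive for $n$ large, so the entire content of the corollary is the estimate of $\log\binom{n}{\lfloor n^\alpha\rfloor}$. Note that both halves of the statement are handled simultaneously: since $\binom{n}{n-\lfloor n^\alpha\rfloor} = \binom{n}{\lfloor n^\alpha\rfloor}$, Theorem~\ref{thm:lkldim} gives literally the same lower bound for $\cube{n}{1,(n-\lfloor n^\alpha\rfloor)}$ as for $\cube{n}{1,\lfloor n^\alpha\rfloor}$.

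For the binomial estimate I would use the elementary sandwich $\left(\tfrac{n}{k}\right)^k \le \binom{n}{k} \le \left(\tfrac{en}{k}\right)^k$ (or Stirling's formula, if one wants the exact second-order term), which yields $\log\binom{n}{k} = k\log\tfrac{n}{k} + O(k)$. Writing $k = \lfloor n^\alpha\rfloor = n^\alpha\bigl(1 + O(n^{-\alpha})\bigr)$ gives $\log\tfrac{n}{k} = (1-\alpha)\log n + O(n^{-\alpha})$, and hence $\log\binom{n}{\lfloor n^\alpha\rfloor} = (1-\alpha)n^\alpha\log n + O(n^\alpha)$. Substituting this into the displayed inequality, the leading term is $\frac{(1-\alpha)n^\alpha\log n}{\log n} = (1-\alpha)n^\alpha$, and all remaining contributions are of strictly lower order; collecting them produces a bound of the form $\ldim{\cube{n}{1,\lfloor n^\alpha\rfloor}} \ge (1-\alpha)n^\alpha - o(n^\alpha)$, where the dominant error term comes from the second summand of Theorem~\ref{thm:lkldim} (of order $\frac{n^\alpha\log\log n}{\log n}$, since $\log\binom{n}{\lfloor n^\alpha\rfloor} = \Theta(n^\alpha\log n)$).

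There is essentially no obstacle here -- this is exactly why the corollary is labelled immediate -- so the ``hard part'' amounts only to two points of care. First, the endpoint cases $\alpha \in \{0,1\}$ are degenerate: $\lfloor n^\alpha\rfloor \in \{1,n\}$, Theorem~\ref{thm:lkldim} may not even apply (it requires $k<n$), but the right-hand side of the corollary is then a constant or $o(1)$ and the inequality is trivial, so one may assume $0 < \alpha < 1$ and $n$ large enough that $2 \le \lfloor n^\alpha\rfloor < n$. Second, one should keep track of the floor in passing from $n^\alpha$ to $\lfloor n^\alpha\rfloor$, but as indicated above this perturbs only lower-order terms. I would also, when writing the final line, simply record whatever form of the error term actually falls out of the substitution rather than carrying one over from the $k=\lfloor n/2\rfloor$ case.
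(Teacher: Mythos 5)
Your proposal is correct and is exactly the paper's (unwritten) proof: the corollary is stated as immediate from Theorem~\ref{thm:lkldim}, and the only content is the substitution $\log\binom{n}{\lfloor n^\alpha\rfloor} = (1-\alpha)n^\alpha\log n + O(n^\alpha)$ together with the symmetry $\binom{n}{n-k}=\binom{n}{k}$, both of which you carry out properly, including the degenerate endpoints. Your closing remark is well taken: substituting into $\left(\frac{1}{\log n} - O\left(\frac{\log\log n}{(\log n)^2}\right)\right)\log\binom{n}{k}$ with $\log\binom{n}{k}=\Theta(n^\alpha\log n)$ produces an error term of order $\frac{n^\alpha\log\log n}{\log n}$, not $\frac{n^\alpha\log\log n}{(\log n)^2}$ as printed; the exponent $2$ in the corollary appears to have been carried over from the $k=\lfloor n/2\rfloor$ case, where the main term is $\frac{n}{\log n}$ rather than $n^\alpha$. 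So what you prove is the correct (slightly weaker) form of the statement, and the discrepancy lies in the paper's displayed error term, not in your argument.
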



We also have the following lower bound for the first and middle layers. The bound $\ldim{\cube{n}{}} \geq (1+o(1))\frac{n}{\log n}$, which improves one of Kim et al.'s results by a constant factor, was also proved by Stefan Felsner by different means. Felsner's proof can be found in \cite{diffgraph}.

\begin{corollary}
As $n\to\infty$, $\ldim{\cube{n}{}} \geq \ldim{\cube{n}{1,\lfloor n/2\rfloor}} \geq \frac{n}{\log n} - O\big(\frac{n\log\log n}{(\log n)^2}\big)$.\hfill\qedsymbol
\end{corollary}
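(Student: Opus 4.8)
The plan is to obtain both inequalities directly from results already in hand. For the first inequality, I would invoke the monotonicity of local dimension under taking induced suborders: $\cube{n}{1,\lfloor n/2\rfloor}$ is by definition the suborder of $\cube{n}{}$ induced on $[n]^{(1)}\cup[n]^{(\lfloor n/2\rfloor)}$, and if $\mathcal{L}$ is any local realiser of $\cube{n}{}$, then deleting from every partial linear extension in $\mathcal{L}$ all elements outside that ground set yields a local realiser of $\cube{n}{1,\lfloor n/2\rfloor}$ in which no multiplicity has increased. Hence $\ldim{\cube{n}{1,\lfloor n/2\rfloor}}\leq\ldim{\cube{n}{}}$. (This is precisely the principle already used for inequality~\ref{ineq:lex1}.)

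For the second inequality, I would apply Theorem~\ref{thm:lkldim} with $\ell=1$ and $k=\lfloor n/2\rfloor$, which gives
\[
\ldim{\cube{n}{1,\lfloor n/2\rfloor}} \geq \left(\frac{1}{\log n} - O\left(\frac{\log\log n}{(\log n)^2}\right)\right)\log\binom{n}{\lfloor n/2\rfloor}.
\]
Then I would estimate the binomial coefficient by Stirling's formula: $\binom{n}{\lfloor n/2\rfloor} = (1+o(1))\,2^n\big/\sqrt{\pi n/2}$, so $\log\binom{n}{\lfloor n/2\rfloor} = n - \tfrac12\log n + O(1) = n - O(\log n)$. Substituting this into the bound and multiplying out the two factors, the cross terms $\tfrac{1}{\log n}\cdot O(\log n)=O(1)$ and $O\!\left(\tfrac{\log\log n}{(\log n)^2}\right)\cdot O(\log n) = O\!\left(\tfrac{\log\log n}{\log n}\right)$ are each absorbed into the error term $O\!\left(\tfrac{n\log\log n}{(\log n)^2}\right)$, leaving $\ldim{\cube{n}{1,\lfloor n/2\rfloor}} \geq \tfrac{n}{\log n} - O\!\left(\tfrac{n\log\log n}{(\log n)^2}\right)$, as claimed.

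I do not anticipate any real obstacle: both steps are immediate consequences of earlier material. The only point requiring a little care is the bookkeeping of the error terms when the two factors are combined, to confirm that the $\log n$-scale correction in $\log\binom{n}{\lfloor n/2\rfloor}$ does not worsen the stated $O\!\left(\tfrac{n\log\log n}{(\log n)^2}\right)$ bound.
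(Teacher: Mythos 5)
Your proposal is correct and matches the paper's intent: the corollary is stated with no separate proof precisely because it follows from Theorem~\ref{thm:lkldim} with $\ell=1$, $k=\lfloor n/2\rfloor$, together with the suborder monotonicity of local dimension and the estimate $\log\binom{n}{\lfloor n/2\rfloor}=n-O(\log n)$. Your error-term bookkeeping is also right, since the $O(1)$ and $O(\log\log n/\log n)$ cross terms are dominated by $O\big(\tfrac{n\log\log n}{(\log n)^2}\big)$.
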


The following lower bound for the local dimension of $\cube{n}{1,2}$ was proved by Gir\~ao and Lewis, and the proof was included in~\cite{diffgraph}.

\begin{theorem}\label{thm:12}
As $n\to\infty$, $\ldim{\cube{n}{1,2}} \geq \log\log n - O(\log\log\log n)$.
\hfill\qedsymbol
\end{theorem}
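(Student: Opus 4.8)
It is convenient to regard $\cube{n}{1,2}$ as the incidence poset of the complete graph $K_n$: the singletons are the vertices, the $2$-sets are the edges, and $\{i\}<\{i,j\}$. For ordinary dimension it is classical that $\dim\cube{n}{1,2}=(1+o(1))\log\log n$, and the lower bound there is instructive: a realiser restricts to $\dim\cube{n}{1,2}$ linear orders of $[n]$ with the property that for all distinct $i,j,k$ some order places both $i$ and $j$ before $k$ (using $\{i\},\{j\}<\{i,j\}$ and the reversal of the critical pair $(\{i,j\},\{k\})$), and a recursive Erd\H{o}s--Szekeres argument shows such a family must have size $\ge\log\log n-O(1)$: from $d$ such orders pass to a subset $V$ of size $\ge\sqrt n$ on which $\pi_d$ induces the same linear order as $\pi_1$, so that $\pi_d$ is redundant on $V$ while $\pi_1,\dots,\pi_{d-1}$ still have the property there; iterating $d$ times forces $|V|\le 2$. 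The plan is to mimic this for a local realiser, with the multiplicity bound substituting for ``$d$ orders''.

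So suppose $\ldim\cube{n}{1,2}=d$ and let $\mathcal L$ be a local realiser witnessing this; I will track only the multiplicities of the singletons, so in fact the argument needs only that every singleton lies in $\le d$ lists. Record for each list the linear order it induces on the singletons it contains together with the $2$-sets it contains and their positions; reversal of the critical pairs $(\{i,j\},\{k\})$ with $k\notin\{i,j\}$ gives the weak suitability property that for every triple of distinct vertices $i,j,k$ some list has the edge $\{i,j\}$ before the vertex $\{k\}$. The heart of the proof is a recursion step: from such data on a vertex set $W$ with singleton-multiplicity at most $b\ge 2$, produce analogous data on a subset $V$ with $|V|\ge\sqrt{|W|}\,/\,\mathrm{poly}(b)$ and singleton-multiplicity at most $b-1$. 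To find $V$: fix a vertex $v_0$ and use pigeonhole over the $\le b$ lists containing $\{v_0\}$ to get a single list $L^\star$ that reverses $(\{i,j\},\{v_0\})$ for at least $\binom{|W|-1}{2}/b$ of the edges in $W$; the vertex set $U$ of those edges then satisfies $|U|\ge|W|/\sqrt b$, and in $L^\star$ all those edges and their endpoints lie before $\{v_0\}$. Now, using a suitably chosen second list $L'$ that is rich in vertices of $U$, apply Erd\H{o}s--Szekeres to the orders $L^\star$ and $L'$ induce on $U$ to get $V\subseteq U$ with $|V|\ge\sqrt{|U|}$ on which $L^\star$ and $L'$ induce the same order; merging $L^\star|_V$ and $L'|_V$ into one list (the common order on $V$, each edge inserted as early as either list allows) yields a list reversing every critical pair of $\cube{V}{1,2}$ that is reversed by $L^\star$ or $L'$. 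As every vertex of $V$ lies in both $L^\star$ and $L'$, this merge lowers the singleton-multiplicity on $V$ by one without destroying any reversal. Iterating this $d-1$ times from $W=[n]$ would push the multiplicity down to $1$, forcing the final vertex set to have at most one element (a poset of local dimension $1$ is a chain, but $\cube{m}{1,2}$ is not for $m\ge 2$); tracking sizes, $\log|W|$ roughly halves each round up to an additive $O(\log d)$, so the iteration cannot run for $d-1$ rounds unless $\log n\le 2^{\,d}\,\mathrm{poly}(\log\log n)$, which is exactly $d\ge\log\log n-O(\log\log\log n)$.

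The step I expect to cause the most trouble is the one that lowers the multiplicity by one. In the dimension argument there is a genuine coordinate $\pi_d$ to discard; for a local realiser the lists seen by different vertices are unrelated, so one must first synthesize a coordinate by pigeonholing many critical-pair witnesses into one list $L^\star$, and then neutralise $L^\star$ on a large subset. Two points need care: (i) locating a second list $L'$ that overlaps $U$ enough that the Erd\H{o}s--Szekeres subset $V$ still has size about $\sqrt{|U|}$ --- this is the source of (most of) the per-round loss and may require an additional pigeonholing layer --- and (ii) checking that after the merge all critical pairs of $\cube{V}{1,2}$ involving edges, not just the vertex--vertex pairs, remain reversed, which is why the merge must keep, for each edge, the earlier of its two positions. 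Confining the per-round shrinkage to a single square root times a $\mathrm{poly}(b)$ factor is precisely what keeps the leading term equal to $\log\log n$ and the slack down to $O(\log\log\log n)$.
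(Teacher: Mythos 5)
A preliminary remark: the paper does not actually contain a proof of Theorem~\ref{thm:12}; it attributes the result to Gir\~ao and Lewis and defers the proof to \cite{diffgraph}. So your proposal cannot be checked against an in-paper argument, and I am assessing it on its own terms. The overall shape (an Erd\H{o}s--Szekeres recursion that halves $\log|W|$ per round while decrementing a multiplicity bound, with $\mathrm{poly}(b)$ losses absorbed into the $O(\log\log\log n)$ term) is the right kind of skeleton for a $\log\log n$ bound, and your quantitative bookkeeping at the end is consistent. The problem is that the one step you yourself flag as delicate --- reducing the singleton-multiplicity by one on a large subset --- does not work as described, and the gap is exactly at the point where local dimension genuinely differs from dimension.

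Concretely: (i) You assert that in $L^\star$ ``all those edges and their endpoints lie before $\{v_0\}$.'' A partial linear extension that contains the $2$-set $\{i,j\}$ need not contain the singletons $\{i\}$ or $\{j\}$ at all, so $L^\star$ (found by pigeonholing edge-versus-$v_0$ reversals) induces no linear order on the vertex set $U$, and the Erd\H{o}s--Szekeres comparison of ``the orders $L^\star$ and $L'$ induce on $U$'' is undefined. (ii) Even granting an agreement set $V$, merging $L^\star$ and $L'$ lowers the multiplicity only of those singletons lying in \emph{both} lists; your claim that every vertex of $V$ does is unsupported (and, for $L^\star$, false in general by (i)). More structurally, the recursion needs the multiplicity bound to drop from $b$ to $b-1$ for \emph{every} vertex of $V$, but the $\le b$ lists containing $\{v\}$ vary with $v$, so neutralising a single pair of lists cannot uniformly decrement the bound unless you first force $V$ (as singletons) inside one common list --- which is an additional, nontrivial pigeonholing/structure step that the proposal gestures at but does not supply. (iii) Finally, the set $U$ is extracted from reversals aimed at $v_0$, yet $v_0$ is discarded and no property of $U$ other than its size is used afterwards; in particular it is never verified that the triple property on $V$ survives with witnesses among the \emph{remaining} lists after the merge, nor that the merged list is a valid partial extension when an edge of one list precedes (in the common order) an endpoint present only in the other. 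As it stands, the proposal is a plausible plan with the correct target arithmetic, but the multiplicity-reduction lemma at its heart is not proved, and the specific mechanism offered for it fails.
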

Spencer~\cite{spencer} proved that $\pdim{\cube{n}{1,2}} \leq \log\log n + O(\log\log\log n)$, so this bound is asymptotically the best possible.
Because $\cube{n-k+1}{1,2}$ embeds into $\cube{n}{k,k+1}$ and $\cube{n}{\ell,k}$ embeds into $\cube{n+1}{\ell,k+1}$, we also have
\[\ldim{\cube{n}{\ell,k}} \geq (1-o_{\ell,k}(1))\log\log n\] as $n\to\infty$, for every fixed $\ell<k\in\N$.

One immediate corollary of Theorem~\ref{thm:12} is that $\R^n$ has local dimension $(1-o(1))n$. In fact, this implies that $\ldim{\R^n} = n$, because otherwise we would get a violation of subadditivity. It then follows by a compactness argument that, for every $n\in\N$, there exists a finite poset with dimension and local dimension $n$. Barrera-Cruz, Prag, Smith, Taylor, and Trotter~\cite{bpstt} proved this result independently using the Product Ramsey Theorem.

\subsection{Suborders of the divisibility lattice}
In \cite{lewissouza}, Lewis and Souza studied the dimension of suborders of the divisibility lattice $(\N,\mid)$. They proved that
\[\ldim{([n],\mid)} \leq \pdim{([n],\mid)} \leq \left(4\ln 2+o(1)\right)\frac{(\log n)^2}{\log\log n}.\]
We can prove the following lower bound using Corollary~\ref{cor:alpha}.
\begin{proposition}
As $n\to\infty$,
\[\ldim{([n],\mid)} \geq \left(\frac{1}{4}-o(1)\right)\frac{\log n}{\log\log n}.\]
\end{proposition}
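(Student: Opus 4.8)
The plan is to apply Corollary~\ref{cor:alpha} to a well-chosen suborder of $([n],\mid)$ that is isomorphic to (a copy of) $\cube{m}{1,\lfloor m^\alpha\rfloor}$ for suitable parameters, and then optimise. The natural choice is to restrict to squarefree numbers built from the first $m$ primes $p_1 < p_2 < \dots < p_m$: a squarefree product of $j$ of these primes corresponds to a $j$-element subset of $[m]$, and divisibility among such numbers is exactly containment of the corresponding subsets. So the subposet of $([N],\mid)$ consisting of all squarefree products of exactly one or exactly $k$ of the first $m$ primes is isomorphic to $\cube{m}{1,k}$, provided the largest such product is at most $N$. The largest product of $k$ of the first $m$ primes is $p_{m-k+1}p_{m-k+2}\cdots p_m$, which we must bound above by $n$.

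First I would fix $\alpha$ (to be optimised at the end) and set $k = \lfloor m^\alpha\rfloor$. By the prime number theorem, $p_i = (1+o(1))\,i\log i$, so $\log\big(p_{m-k+1}\cdots p_m\big) = \sum_{i=m-k+1}^m \log p_i \le k\log p_m = (1+o(1))\,k\log m = (1+o(1))\,m^\alpha\log m$. Hence if we choose $m$ as large as possible subject to $(1+o(1))m^\alpha\log m \le \log n$, we get $m^\alpha = (1+o(1))\frac{\log n}{\log m}$, and since $\log m = (1+o(1))\frac{1}{\alpha}\log\log n$ this gives $m^\alpha = (\alpha+o(1))\frac{\log n}{\log\log n}$. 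Then Corollary~\ref{cor:alpha} applied inside this embedded copy yields
\[
\ldim{([n],\mid)} \ge \ldim{\cube{m}{1,\lfloor m^\alpha\rfloor}} \ge (1-\alpha)m^\alpha - O\!\left(\frac{m^\alpha\log\log m}{(\log m)^2}\right) = \big((1-\alpha)\alpha + o(1)\big)\frac{\log n}{\log\log n}.
\]
Optimising $(1-\alpha)\alpha$ over $\alpha\in[0,1]$ gives the maximum $\tfrac14$ at $\alpha = \tfrac12$, which is exactly the claimed bound.

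The main obstacle is bookkeeping the error terms carefully: one has to make sure the $o(1)$ from the prime number theorem, the floor in $k=\lfloor m^\alpha\rfloor$, and the choice of the largest admissible $m$ all combine into a single $o(1)$ multiplying $\frac{\log n}{\log\log n}$, and that $m\to\infty$ as $n\to\infty$ so that Corollary~\ref{cor:alpha}'s asymptotics actually apply. A minor point to check is that Corollary~\ref{cor:alpha} is stated for $\cube{n}{1,\lfloor n^\alpha\rfloor}$ with the base variable playing two roles, so one should invoke it with $m$ in place of $n$ and $\alpha = 1/2$, noting $\lfloor m^{1/2}\rfloor$ is precisely the middle-type exponent needed; everything else is routine estimation.
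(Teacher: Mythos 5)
Your proposal is correct and follows essentially the same route as the paper: embed a copy of $\cube{m}{1,\lfloor\sqrt{m}\rfloor}$ into $([n],\mid)$ via squarefree products of the first $m$ primes, bound the largest such product using the prime number theorem, and apply Corollary~\ref{cor:alpha}. The paper simply fixes $\alpha=1/2$ from the outset (choosing the number of primes to be about $\frac{1}{4}\bigl(\frac{\log n}{\log\log n}\bigr)^2$), whereas you optimise over $\alpha$ and arrive at the same choice.
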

\begin{proof}
Fix $c < 1$ and let $k = \big\lfloor\frac{c^2}{4}\big(\frac{\log n}{\log\log n}\big)^2\big\rfloor$. Because $p_k^{\sqrt{k}} = (\log n)^{(c+o(1))\frac{\log n}{\log\log n}} = n^{c+o(1)} \ll n$, we can define an embedding from $\cube{k}{1,\lfloor\sqrt{k}\rfloor}$ into $([n],\mid)$, which therefore has local dimension at least $(\frac{c}{4}-o(1))\frac{\log n}{\log\log n}$. Now taking $c \to 1$ as slowly as necessary completes the proof of the lower bound.
\end{proof}

\section{Upper bounds for two layers}



In this section, we prove some upper bounds on the local dimension of posets of the form $\cube{n}{\ell,k}$.

The following proposition gives good upper bounds on the local dimension of suborders of the form $\cube{n}{\ell,(n-k)}$ when $\ell$ and $k$ are constant.
\begin{proposition}
Whenever $\ell < n-k$, $\ldim{\cube{n}{\ell,(n-k)}} \leq 2 + \max\{\ell,k\}$.
\end{proposition}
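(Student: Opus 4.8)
The plan is to exhibit an explicit small local realiser of $\cube{n}{\ell,(n-k)}$ built from only a handful of local/complete extensions, using the fact that elements in layer $\ell$ are "small" sets and elements in layer $n-k$ are "co-small" sets (their complements have size $k$). First I would recall the standard fact that $\pdim{\cube{n}{\ell,(n-k)}}$ is governed by the \emph{critical pairs}: incomparable pairs $(A,B)$ with $A$ in layer $\ell$, $B$ in layer $n-k$, and $A\not\subseteq B$, i.e. $A\cap([n]\setminus B)\neq\emptyset$. Since $|A|=\ell$ and $|[n]\setminus B|=k$, each such obstruction is "witnessed" by a single element lying in both $A$ and $[n]\setminus B$. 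So the realiser needs, for every pair $(A,B)$ with a common element $x\in A\cap B^{c}$, some partial extension in which $A$ precedes $B$.

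The key construction: for each $x\in[n]$, introduce a partial linear extension $L_x$ (or a small bounded number of them) whose job is to handle all critical pairs witnessed by $x$. Concretely, $L_x$ should place all layer-$\ell$ sets \emph{containing} $x$ before all layer-$(n-k)$ sets \emph{not containing} $x$, while still being order-preserving; the sets not involving $x$ in the relevant way can be slotted in arbitrarily (or omitted, since partial extensions are allowed). One checks this is a genuine partial linear extension: if $A\subseteq B$ then we never need $B$ before $A$, and consistency within each layer is automatic since layers are antichains. Then for any critical pair $(A,B)$, picking any $x\in A\cap B^{c}$ gives $A<_{L_x}B$ in $L_x$, so the collection $\{L_x\}_{x\in[n]}$ together with one full linear extension $K$ (to handle the comparable pairs $A\subseteq B$) forms a local realiser.

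The crux is the multiplicity bound. A layer-$\ell$ set $A$ appears in $L_x$ only when $x\in A$ (plus possibly as filler — but we can simply choose not to include $A$ in $L_x$ when $x\notin A$), so $A$ appears in exactly $|A|=\ell$ of the $L_x$'s, plus once in $K$: multiplicity $\ell+1$. A layer-$(n-k)$ set $B$ appears in $L_x$ only when $x\notin B$, i.e. for the $k$ values $x\in B^{c}$, plus once in $K$: multiplicity $k+1$. Taking the max gives $\max\{\ell,k\}+1$; the extra $+1$ in the claimed bound $2+\max\{\ell,k\}$ is slack, or is needed to accommodate a minor technicality (e.g. ensuring each $L_x$ is nonempty/nontrivial, or splitting $K$). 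The main obstacle I anticipate is verifying carefully that each $L_x$ really extends to a consistent linear order on the elements it does include — in particular that one can order the "$A\ni x$" block and the "$B\not\ni x$" block without creating a backward comparable pair — but this follows because any $A\subseteq B$ with $x\in A$ forces $x\in B$, so such a $B$ is simply not in $L_x$'s "after" block; hence no conflict arises. The remaining comparable pairs $(A,B)$ with $A\subseteq B$ are covered by the single linear extension $K$, completing the argument.
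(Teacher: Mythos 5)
Your construction has the right skeleton --- one auxiliary partial extension per ground element $x$, with an $\ell$-set appearing only in the $\ell$ extensions indexed by its elements and an $(n-k)$-set only in the $k$ extensions indexed by its non-elements --- but the two blocks of $L_x$ are oriented the wrong way, and this is fatal. Under the paper's definition, a local realiser must witness \emph{every} ordered pair $(x,y)$ with $x\not\geq y$; so for an incomparable pair $(A,B)$ with $A$ in layer $\ell$ and $B$ in layer $n-k$, both orders $A\leq_L B$ and $B\leq_L A$ must occur somewhere in the family. The direction ``$A$ before $B$'' is cheap: a single linear extension listing the whole bottom layer before the whole top layer provides it for all such pairs at once (and simultaneously handles the comparable pairs). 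The expensive direction is ``$B$ before $A$'', and your $L_x$, which puts the $\ell$-sets containing $x$ \emph{before} the $(n-k)$-sets avoiding $x$, never provides it; neither does $K$. So your family is not a local realiser. The paper's $L_i$ is exactly your $L_x$ with the blocks swapped: the $(n-k)$-sets not containing $i$ come first, then the $\ell$-sets containing $i$. This is still a valid partial linear extension (if $i\in A$ and $i\notin B$ then $A\not\subseteq B$, so no comparability points from the second block back to the first), and for any incomparable pair, any $i\in A\setminus B$ gives $B<_{L_i}A$.

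There is a second, smaller gap concerning within-layer pairs. Each layer is an antichain, so for two $\ell$-sets $A,A'$ you must witness both $(A,A')$ and $(A',A)$; a single full extension $K$ gives only one of the two orders, and the $L_x$'s cannot be relied on for the other (e.g.\ $A$ and $A'$ may be disjoint, in which case no $L_x$ contains both). This is why the paper uses two full extensions $\pi_0$ and $\pi_1$ ordering each layer in opposite ways, and why the correct count is $2+\max\{\ell,k\}$ rather than the $1+\max\{\ell,k\}$ you arrive at --- the extra $+1$ is not slack.
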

\begin{proof}
Let $\pi_0$ be the linear extension consisting of $\ell$-sets in any order, followed by the $(n-k)$-sets in any order. Then let $\pi_1$ be the $\ell$-element sets in the opposite order as in $\pi_0$ followed by the $(n-k)$-element sets in the opposite order as in $\pi_0$. Then, for each $i\in[n]$, let $L_i$ be the $(n-k)$-element sets not containing $i$ (in any order) followed by the $\ell$-element sets containing $i$. It's obvious that $\{\pi_0,\pi_1,L_1,L_2,\dots,L_n\}$ is a local realiser of $\cube{n}{\ell,(n-k)}$ in which every $\ell$-set has multiplicity $2+\ell$ and every $(n-k)$-set has multiplicity $2+k$.
\end{proof}
This generalises Ueckerdt's~\cite{ueckerdt} result that $\ldim{S_n} = 3$ for $n \geq 3$. By contrast, F\"{u}redi~\cite{furedi} proved that, for any constant $k\geq 3$, $\pdim{\cube{n}{k,(n-k)}} = n-2$ for all sufficiently large $n$.

\begin{corollary}
For any fixed $\alpha\in(0,1)$, $n^\alpha\leq \ldim{\cube{n}{1,(n-\lfloor n^\alpha\rfloor)}} = \Theta(n^\alpha)$, with constants between $1-\alpha$ and $1$.
\end{corollary}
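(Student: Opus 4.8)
The plan is to read this statement off directly from the preceding Proposition and from Corollary~\ref{cor:alpha}; the only work is checking the hypotheses and collecting the error terms, so I would keep the actual proof to a couple of lines.

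First I would obtain the upper bound by invoking the Proposition just above with $\ell = 1$ and $k = \lfloor n^\alpha\rfloor$. Its hypothesis $\ell < n-k$ becomes $1 < n - \lfloor n^\alpha\rfloor$, which holds once $n$ is large (this is where $\alpha < 1$ enters), so $\ldim{\cube{n}{1,(n-\lfloor n^\alpha\rfloor)}} \le 2 + \max\{1,\lfloor n^\alpha\rfloor\} = \lfloor n^\alpha\rfloor + 2 \le n^\alpha + 2$. This already pins the upper leading constant at $1$.

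Next I would take the lower bound straight from Corollary~\ref{cor:alpha}, which asserts $\ldim{\cube{n}{1,(n-\lfloor n^\alpha\rfloor)}} \ge (1-\alpha)n^\alpha - O\big(n^\alpha\log\log n/(\log n)^2\big) = (1-\alpha-o(1))n^\alpha$. Putting the two estimates together gives $(1-\alpha-o(1))n^\alpha \le \ldim{\cube{n}{1,(n-\lfloor n^\alpha\rfloor)}} \le n^\alpha + 2$, which is exactly the claimed $\Theta(n^\alpha)$ behaviour with leading constant trapped between $1-\alpha$ and $1$.

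There is no real obstacle inside the corollary itself. The genuinely hard (and still open) point, which I would flag, is that the two bounds do not meet: the explicit realiser built in the Proposition (the two ``reversing'' extensions together with the $n$ extensions $L_i$) looks close to optimal, so I would expect the truth to be $(1+o(1))n^\alpha$, but the lower bound passes through an entropy/Crespelle-code estimate for a random lexicographic sum and loses a constant factor. Closing the gap would require arguing directly with the conflict structure of the incomparable pairs $(\{i\},[n]\setminus S)$ and $([n]\setminus S,\{i\})$ for $i\in S$, rather than through Theorem~\ref{thm:lkldim}; supplying that combinatorial argument is the part I would expect to be difficult.
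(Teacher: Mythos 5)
Your proof is correct and is exactly the paper's intended derivation: the corollary is stated without proof as an immediate consequence of the preceding Proposition with $\ell=1$, $k=\lfloor n^\alpha\rfloor$ (upper bound, leading constant $1$) and of Corollary~\ref{cor:alpha} (lower bound, leading constant $1-\alpha$). The only caveat is that the literal inequality $n^\alpha\leq\ldim{\cube{n}{1,(n-\lfloor n^\alpha\rfloor)}}$ appearing in the statement is not delivered by this argument and is inconsistent with the stated lower constant $1-\alpha$, so it is evidently a typo rather than a gap in your reasoning.
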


For pairs of layers that are close together, we have the following upper bound due to Brightwell, Kierstead, Kostochka, and Trotter~\cite{mid}.
\begin{theorem}
For any $s,k,n\in\N$ with $s+k \leq n$, $\ldim{\cube{n}{s,(s+k)}} \leq \pdim{\cube{n}{s,(s+k)}} \leq (4k^2+18k)\lceil\ln n\rceil$. In the case $k=1$, $\ldim{\cube{n}{s,(s+1)}} \leq \pdim{\cube{n}{s,(s+1)}} \leq 6\lceil\log_3 n\rceil$.
\hfill\qedsymbol
\end{theorem}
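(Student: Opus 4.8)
The plan is to construct an explicit realiser of $\cube{n}{s,(s+k)}$ and bound its size, via the standard reduction for height-two posets: since every nontrivial comparability runs from an $s$-set up to an $(s+k)$-set, a family of linear extensions is a realiser as soon as it reverses every critical pair $(B,A)$ --- that is, every pair with $|A|=s$, $|B|=s+k$, $A\not\subseteq B$ --- together with two trivial extra orders that reverse same-layer incomparable pairs. Writing $j=|A\setminus B|\ge 1$, so that $|B\setminus A|=j+k$, a critical pair is determined up to symmetry by $j$, and by the alternating-cycle criterion it suffices to cover all critical pairs by few \emph{reversible} sets.

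The basic gadget is a linear extension $L_F$ attached to a subset $F\subseteq[n]$: order the sets by $|S\cap F|$, then by $|S|$, then arbitrarily. Since $A\subseteq B$ forces $|A\cap F|\le|B\cap F|$ and $|A|<|B|$, $L_F$ is a genuine linear extension, and it reverses $(B,A)$ precisely when $|F\cap(A\setminus B)|>|F\cap(B\setminus A)|$; using $F^c$ in place of $F$ gives instead the condition $|F\cap(B\setminus A)|>|F\cap(A\setminus B)|+k$. An alternating cycle among the pairs reversed by $L_F$ would force $|B_\ell\cap F|$ to strictly increase around the cycle, so these pairs form a reversible set. The whole problem therefore reduces to producing a family $\mathcal F$ of subsets of $[n]$ of size $O(k^2\log n)$ such that every critical pair is ``split'' by some $F\in\mathcal F$ in one of these two senses.

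For critical pairs with $|A\setminus B|=O(k)$ I would include in $\mathcal F$ a random collection of $O(k^2\log n)$ subsets, each containing every element independently with probability $\frac{1}{k+2}$: since $A\setminus B$ and $B\setminus A$ are disjoint, the events $\{F\cap(A\setminus B)\ne\emptyset\}$ and $\{F\cap(B\setminus A)=\emptyset\}$ are independent with product probability $\Omega(1/k)$, so a union bound over the polynomially many relevant configurations succeeds; this already produces the leading $O(k^2\log n)$ term. The hard case --- and the real content of the theorem --- is when $j=|A\setminus B|$ is large: then $A\setminus B$ and $B\setminus A$ occupy almost all of $[n]$, there are $2^{\Theta(n)}$ essentially distinct such pairs, and any oblivious probabilistic construction collapses into needing one coordinate per element, i.e.\ the trivial bound $\pdim{\cube{n}{s,(s+k)}}\le n$. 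The way out is a recursion: fix a family of $O(\log n)$ partitions of $[n]$ --- into two parts in general, into three parts in the finer $k=1$ analysis --- whose common refinement is discrete, and at level $t$ use only $O(k^2)$ gadgets adapted to the level-$t$ blocks, so that every pair not yet reversed after level $t-1$ has, inside the block where its incomparability localises at scale $t$, effectively small $|A\setminus B|$ and is caught there. Getting this recursion to mesh with the reversibility bookkeeping --- and pushing the constant down to $6$ when $k=1$ using the three-part partitions and $\lceil\log_3 n\rceil$ levels --- is where the main obstacle lies.
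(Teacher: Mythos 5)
First, a point of reference: the paper does not prove this theorem. It is quoted, with a qed symbol and no argument, from Brightwell, Kierstead, Kostochka, and Trotter~\cite{mid}, so there is no internal proof to compare yours against; a complete proof would have to reproduce their construction, including the specific constants $4k^2+18k$ and $6\lceil\log_3 n\rceil$, which your sketch does not reach.

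Your skeleton is sound: reducing to the critical pairs $(Y,X)$ with $|X|=s$, $|Y|=s+k$, $X\not\subseteq Y$, handling same-layer and comparable-direction pairs with two extra extensions, and using the weight orders $L_F$ is all correct, and your treatment of pairs with $j=|A\setminus B|=O(k)$ genuinely works, since there are only $n^{O(k)}$ relevant configurations $(A\setminus B,\,B\setminus A)$ and the union bound over $O(k^2\log n)$ random sets of density $1/(k+2)$ closes. But the theorem is not proved, because the large-$j$ case --- which you correctly identify as the crux --- is left open, and the proposed repair does not work as described. A critical pair must be reversed inside a \emph{single} linear extension, so ``progress'' against a pair cannot be accumulated across levels of a partition hierarchy; reversal by $L_F$ is an all-or-nothing event for each $F$. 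Moreover, if the gadgets at level $t$ are restricted to blocks of the partition, then for a pair whose sets $A\setminus B$ and $B\setminus A$ are well mixed in every block, the only blocks $C$ satisfying $|C\cap(A\setminus B)|>|C\cap(B\setminus A)|$ may be the singletons, which merely recovers the trivial bound $n+2$; and the total number of blocks in such a hierarchy is $\Theta(n)$ anyway. Since $j$ can be as large as $\Theta(n)$ when $s\approx n/2$ and there are $2^{\Theta(n)}$ such pairs, no oblivious union bound over individually chosen random sets can succeed with only $O(k^2\log n)$ of them. Some genuinely different idea --- the one actually used in \cite{mid} --- is needed to handle these pairs and to obtain the stated constants, so the proposal has a real gap at its central step.
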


Kostochka~\cite{kostochka} later improved the second bound.
\begin{theorem}
For all $s \leq n$
\[
\ldim{\cube{n}{s,s+1}}\leq\pdim{\cube{n}{s,s+1}} \leq 2\min\big\{k:2\cdot k!\geq n\big\} = (2+o(1))\frac{\log n}{\log\log n}.
\].\hfill\qedsymbol
\end{theorem}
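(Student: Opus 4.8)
The plan is to establish the stated upper bound in the form $\pdim{\cube{n}{s,s+1}} \le 2k$, where $k := \min\{m : 2\cdot m! \ge n\}$; since $\ldim{P}\le\pdim{P}$ for every poset, this yields the theorem, and the asymptotic identity $2k = (2+o(1))\frac{\log n}{\log\log n}$ is then a routine consequence of Stirling's formula ($\log(m!) = m\log m - m\log e + O(\log m)$ forces $m\log m\sim\log n$, hence $\log m\sim\log\log n$ and $m\sim \log n/\log\log n$) together with the minimality of $k$. I will assume $n\ge s+2$, the degenerate cases being trivial.

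The first step is to pass to critical pairs: by the standard characterisation of order dimension (see, e.g., Trotter~\cite{trotter}), $\pdim{P}$ is the least number of linear extensions of $P$ that together reverse every critical pair. A direct check of the down-set/up-set condition shows that in $\cube{n}{s,s+1}$ the critical pairs are exactly the ordered pairs $(B,A)$ with $B\in[n]^{(s)}$, $A\in[n]^{(s+1)}$ and $B\not\subseteq A$, and that reversing $(B,A)$ means placing $A$ below $B$. So the goal becomes: produce $2k$ linear extensions $L_1,\dots,L_{2k}$ such that for every such pair $(B,A)$ some $L_c$ has $A<_{L_c}B$. It is convenient to build each $L_c$ from a pair of potentials $\alpha_c:[n]^{(s)}\to\R$ and $\beta_c:[n]^{(s+1)}\to\R$ satisfying $B\subseteq A\Rightarrow\alpha_c(B)\le\beta_c(A)$: ordering all sets by their potential value, within a common value putting all $s$-sets before all $(s+1)$-sets and then breaking ties arbitrarily, gives a genuine linear extension in which $A<_{L_c}B$ exactly when $\alpha_c(B)>\beta_c(A)$. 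Thus it suffices that the relations $F_c=\{(B,A):\alpha_c(B)>\beta_c(A)\}$ cover all non-containments.

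The potentials come from the hypothesis $n\le 2\cdot k!$: identify $[n]$ with a set of $n$ pairs $(\epsilon_i,\pi_i)$, where $\epsilon_i\in\{0,1\}$ and $\pi_i$ is a permutation of $[k]$ (there are $2\cdot k!\ge n$ such pairs). For each coordinate $j\in[k]$ I would define two of the $2k$ potential-pairs --- a ``high'' one and a ``low'' one --- from the values $\pi_i(j)\in[k]$ refined by the bit $\epsilon_i$, where $\alpha_c,\beta_c$ depend on the whole multiset $\{\pi_i(j):i\in\text{the set}\}$, not merely its extreme value. To reverse a given $(B,A)$ with $B\not\subseteq A$, fix some $p\in B\setminus A$ and use that $\pi_p$, being a permutation, attains every value of $[k]$: at the coordinate where $\pi_p$ is largest (resp.\ smallest), $p$ is at least weakly extreme among the at most $s+1$ elements of $A$, and the bit $\epsilon_p$ --- together with the fact that two distinct labels differ either in $\epsilon$ or in at least two coordinates of $\pi$ --- is what breaks the remaining ties and lets one select $c$ and a direction with $\alpha_c(B)>\beta_c(A)$.

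The heart of the argument, and the step I expect to be hardest, is choosing the $\alpha_c,\beta_c$ so that this verification genuinely works. One cannot simply take $\alpha_c(B)=\max_{i\in B}w_c(i)$ and $\beta_c(A)=\max_{i\in A}w_c(i)$ for a weighting $w_c$ of $[n]$ (equivalently, order sets by their $w_c$-largest element): such a choice is valid but too weak, since it would force, for every $(2s+1)$-set $Z$ and every $s$-subset $B\subseteq Z$, some $c$ whose $w_c$-maximum of $Z$ lies in $B$, hence at least $s+2$ elements of $Z$ to occur as a $w_c$-maximum --- impossible as soon as $n\ge 2s+1$ and $2k<s+2$ (for instance when $n=2s+1=11$). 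So the $F_c$ must use several elements of each set at once, and the real work is to do this while simultaneously (i) keeping $B\subseteq A\Rightarrow\alpha_c(B)\le\beta_c(A)$, and (ii) proving that the at most $s+1$ elements of any $A$ cannot jointly ``block'' a chosen $p\in B\setminus A$ across all $2k$ relations. Point (ii) is where the pigeonhole over the $k$ coordinates of the permutation labels, together with the parity bit, does the essential work.
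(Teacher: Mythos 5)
Your proposal is not a proof but a plan with the decisive step missing. (For reference, the paper itself does not prove this statement; it is quoted from Kostochka~\cite{kostochka}, so the only question is whether your argument stands on its own.) The reductions you do carry out are fine: passing to critical pairs, which in $\cube{n}{s,s+1}$ are exactly the pairs $(B,A)$ with $B\not\subseteq A$, and the observation that a pair of potentials $\alpha_c,\beta_c$ with $B\subseteq A\Rightarrow\alpha_c(B)\le\beta_c(A)$ yields a linear extension reversing precisely the pairs with $\alpha_c(B)>\beta_c(A)$. Your negative observation is also correct and worth having: potentials of the form ``order by the $w_c$-largest element'' cannot work with only $2k$ extensions, so the construction must aggregate information from several elements of each set.

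But the theorem lives entirely in the step you defer. You never define the $2k$ potential pairs: the phrase ``depend on the whole multiset $\{\pi_i(j)\}$, not merely its extreme value'' is a constraint on a definition, not a definition, and the monotonicity requirement $B\subseteq A\Rightarrow\alpha_c(B)\le\beta_c(A)$ is nontrivial precisely for such aggregate functionals (adding the extra element of $A\setminus B$ must never decrease the potential, which already rules out many natural multiset statistics). Likewise, the covering claim --- that for every $(B,A)$ with $B\not\subseteq A$ some coordinate $j$ and some choice of ``high''/``low'' and of the bit $\epsilon$ gives $\alpha_c(B)>\beta_c(A)$ --- is asserted via a gesture at pigeonhole over the $k$ coordinates, but the quantifiers don't yet close: the element $p\in B\setminus A$ being ``weakly extreme among the at most $s+1$ elements of $A$'' at its best coordinate is exactly the kind of statement your own counterexample to the max-based approach shows is insufficient on its own, and you do not explain how the chosen potentials convert weak extremality plus the parity bit into a strict inequality between $\alpha_c(B)$ and $\beta_c(A)$. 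Until the potentials are written down and both (i) monotonicity and (ii) covering are verified against that explicit definition, there is no proof here; as it stands you should cite Kostochka rather than claim an independent argument.
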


Dushnik~\cite{dushnik} showed that $\pdim{\cube{n}{1,k}} \geq n - \sqrt{n}$ whenever $k \geq 2\sqrt{n}$. The next theorem shows that local dimension behaves very differently.
\begin{theorem}\label{thm:upperbound}
For any $n\in\N$ and $k\leq n$,
\[\ldim{\cube{n}{1,k}} \leq \frac{n}{\log n} + \frac{2n\log\log n}{(\log n)^2}+3.\]
More generally, whenever $1 \leq \ell<k\leq n$ and $\ell < \frac{n}{\log n}$, $\ldim{\cube{n}{\ell,k}} \leq (1+o_\ell(1))\frac{n}{\log n}$.
\end{theorem}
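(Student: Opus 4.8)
The plan is to construct an explicit local realiser of $\cube{n}{1,k}$ in which every singleton and every $k$-set has small multiplicity. The key idea is a "block encoding" of the singletons: partition $[n]$ into roughly $\frac{\log n}{2\log\log n}$ blocks $B_1,\dots,B_r$, each of size about $t := 2\log\log n$, so that the number of blocks is small. Within each block, there are $2^t \approx (\log n)^2 \geq n$ (for the relevant range) possible subsets, and we want to use a Sperner-type / antichain argument: assign to each block $B_j$ an injection from its ground set into a suitable collection of subsets so that membership of element $i$ in a short list can be read off from which block $i$ lies in and one further coordinate. Concretely, I would handle each pair (singleton $\{i\}$, $k$-set $X$) with $i\notin X$ by a linear extension that puts $\{i\}$ before $X$, grouping these into few lists by using, for each block, a logarithmic-length realiser of the Boolean lattice restricted to that block.

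First I would set up the block decomposition and verify the counting: with $r \approx n/t$ blocks of size $t \approx 2\log\log n$ we get $rt \approx n$ and $r \approx \frac{n}{2\log\log n}$, but that is already too large; so instead the blocks should be chosen so that $r \approx \frac{n}{\log n}$, i.e. block size $t \approx \log n$, and then within a block we do not enumerate all $2^t$ subsets but rather recurse or use the trivial bound $\pdim{\cube{t}{}} \le t$ together with the observation that the contribution of the within-block structure only multiplies multiplicities additively (via inequality~\ref{ineq:lex3} of Proposition~\ref{prop:lex}, or a direct argument). Second, I would build the outer realiser: treat the quotient where each block is contracted, realise the "which block is $\{i\}$ in, relative to $X$" information using $\approx \frac{n}{\log n}$ lists (one family of lists per block), and then refine each list using a small realiser of the sub-cube on each block. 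Third, I would carefully bound the multiplicity of a $k$-set $X$: $X$ appears in every list, so its multiplicity equals the total number of lists, which must therefore be $\frac{n}{\log n}(1+o(1))$; and the multiplicity of a singleton $\{i\}$ is the number of lists that mention $\{i\}$, which should be $O(\log\log n)$ per block plus lower-order terms — this is where the $\frac{2n\log\log n}{(\log n)^2}$ error term will come from, via a second level of block-splitting to push the number of lists down from the naive $\frac{n\log\log n}{\log n}$ to $\frac{n}{\log n}$.

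The main obstacle will be getting the constant on the leading term exactly right: a naive block construction gives $\ldim{\cube{n}{1,k}} \lesssim \frac{n\log\log n}{\log n}$ (losing a $\log\log n$ factor), and removing that factor requires a two-scale construction — outer blocks of size $\approx \log n$ contracted to a poset on $\approx n/\log n$ points, then within each outer block a second layer of sub-blocks of size $\approx 2\log\log n$, so that the $2^{t}$ subsets of a sub-block outnumber $n$ and a Sperner/antichain embedding lets the relevant list count per outer block drop to $O(1)$ rather than $O(\log\log n/\log\log\log n)$. Balancing these two scales so that the $k$-sets end up with total multiplicity $\frac{n}{\log n} + \frac{2n\log\log n}{(\log n)^2} + 3$ and singletons have no larger multiplicity is the delicate computation; once the parameters are pinned down, the verification that $\mathcal{N}$ is a local realiser is routine (every incomparable pair $(\{i\},X)$ with $i\notin X$ is separated in the sublist coming from $i$'s sub-block, and comparabilities are respected because each list is a genuine partial linear extension). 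Finally, the "more generally" clause for $\cube{n}{\ell,k}$ with $\ell < n/\log n$ follows by the same construction with $\ell$-sets in place of singletons: the only change is that separating $(A,X)$ for an $\ell$-set $A \not\subseteq X$ costs a factor depending on $\ell$ in the within-block realisers, absorbed into the $o_\ell(1)$ term since $\ell = o(n/\log n)$.
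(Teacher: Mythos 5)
Your high-level instinct (group the ground set $[n]$ into $\approx n/\log n$ blocks of size $\approx\log n$, so that a $k$-set pays roughly once per block) is the right one, but the proposal contains a claim that, taken literally, makes the construction impossible, and it omits the one idea that makes the block strategy actually work. You write that a $k$-set $X$ ``appears in every list, so its multiplicity equals the total number of lists, which must therefore be $\frac{n}{\log n}(1+o(1))$.'' If every $k$-set really appeared in every list and there were only $\sim n/\log n$ lists, then for a fixed block $B_j$ you would need a single partial linear extension (or a constant number of them) containing all $k$-sets and all singletons $\{i\}$ with $i\in B_j$, in which each $\{i\}$ comes after every $k$-set not containing $i$ (to witness $\{i\}\not\geq S$) but before every $k$-set containing $i$ (to respect the comparability $\{i\}<S$). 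For two distinct $i,i'\in B_j$ there are $k$-sets containing $i$ but not $i'$ and vice versa, so no such linear order exists. The resolution is that one needs \emph{exponentially many} lists per block --- one list $L_{j,X}$ for each possible trace $X\subseteq B_j$ of a $k$-set on the block --- but each $k$-set belongs to exactly \emph{one} list per block (the one indexed by its own trace), so its multiplicity is the number of blocks, not the number of lists. Each list $L_{j,X}$ puts the $k$-sets with trace $X$ first and then the singletons $\{i\}$ with $i\in B_j\setminus X$; a singleton then has multiplicity $2^{|B_j|-1}$, which forces the block size down to $\log n-\log\log n$ (not $\log n$) so that $2^{|B_j|-1}\leq n/(2\log n)$, and \emph{that} is where the $\frac{2n\log\log n}{(\log n)^2}$ error term comes from --- not from a second level of block-splitting. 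Your proposed two-scale construction with Sperner-type embeddings inside sub-blocks of size $2\log\log n$ is not needed and, as described, does not address the ordering obstruction above.

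For the record, the paper phrases this as a bipartite graph $G$ with parts $A$ (the blocks) and $B$, identifying $[n]$ with $E(G)$; for each $v\in A$ and $X\subseteq\Gamma(v)$ the list $L_{v,X}$ consists of the $k$-sets whose trace at $v$ is $X$ followed by the $\ell$-sets meeting $\{vu:u\notin X\}$, plus two global extensions $\pi_0,\pi_1$ to handle within-layer incomparabilities and the comparable pairs (a point your proposal also glosses over). This gives multiplicity $|A|+2$ for $k$-sets and $2^{\Delta-1}\ell+2$ for $\ell$-sets, and optimising $|A|\cdot|B|=n$ yields the stated bounds, including the general-$\ell$ case with no extra work. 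As it stands your proposal has a genuine gap at its core counting step and would need to be rebuilt around the trace-partition idea.
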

\begin{proof}


We first describe a general construction, then show two different ways the construction can be realised. Let $G$ be an $n$-edge bipartite graph with parts $A$ and $B$ and suppose each vertex in $A$ has degree at most $\Delta$. We identify $\cube{n}{\ell,k}$ with the set of all $k$-subsets and $\ell$-subsets of $E(G)$ and define a local realiser of $\cube{n}{\ell,k}$ as follows. For each $v\in A$ and each $X\subseteq \Gamma(v)$, let $L_{v,X}$ be a partial linear extension that lists all $k$-sets $S$ such that $\{u\in B : vu\in S\} = X$ followed by all $\ell$-sets containing an edge $vu$ with $u\not\in X$. Now let $\pi_0$ list all the $\ell$-sets of edges in some order followed by all the $k$-sets, and let $\pi_1$ list all the $\ell$-sets in the opposite order followed by all the $k$-sets in the opposite order. Then $\big\{\pi_0,\pi_1\big\}\cup\big\{L_{v,X} : v\in [A], X\subset [B]\big\}$ is a local realiser of $\cube{n}{\ell,k}$ in which every $\ell$-set has multiplicity at most $2^{\Delta-1}\ell+2$ and every $k$-set has multiplicity at most $|A|+2$.

For any $n$, $\ell$, and $k$ with $1\leq \ell<k\leq n$ and $\ell < \frac{n}{\log n}$, we may take $G$ to be any $n$-edge bipartite graph with $|A| = \big\lceil\frac{n}{\log n - \log\log n-\log\ell}\big\rceil$ and $|B| = \lceil\log n - \log\log n-\log\ell\rceil$. Here we have $2^{\Delta-1}+2\ell \leq 2^{|B|-1}\ell+2 < \frac{n}{\log n}+2$ and $|A|+2 \leq \frac{n}{\log n} + \frac{2n}{(\log n)^2}(\log\log n + \log \ell) + 3 $, which gives us an upper bound of $\ldim{\cube{n}{\ell,k}}\leq\frac{n}{\log n} + \frac{2n}{(\log n)^2}(\log\log n + \log \ell) + 3$.

We can do slightly better when $n = 2^{m-1}m\ell$ for some $m\in N$. In this case, we may take $G$ to be the disjoint union of $\ell$ copies of the $m$-dimensional hypercube graph. For this graph, $\Delta = m$ and $|A| = 2^{m-1}\ell$, so we have $\ldim{\cube{n}{\ell,k}} \leq 2^{m-1}\ell + 2 = \frac{n}{m}+2$. Since $m = \log\frac{n}{\ell} - \log m +1$, this bound is asymptotically the same as the one above.

\end{proof}

Theorem~\ref{thm:main} follows immediately from Theorem~\ref{thm:lkldim} and Theorem~\ref{thm:upperbound}.

\section{Problems}

It's known (see, for example, Dushnik~\cite{dushnik}) that $\pdim{\cube{n}{1,k}}$ is monotone in $k$ for $0 \leq k \leq n-1$, but $\ldim{\cube{n}{1,k}}$ is not, as $\ldim{\cube{n}{1,n/c}} \geq \Omega_c(n/\log n)$ while $\ldim{\cube{n}{1,n-k}} \leq k+2$. We do not know whether or not it's unimodal. Our best upper and lower bounds are unimodal, with a single peak at $k\approx n/2$ -- although the upper bound is more like a plateau.

\begin{question}
How does the function $f_n(k) := \ldim{\cube{n}{1,k}}$ behave? Is it unimodal?
\end{question}

A poset has a short Crespelle codeword if and only if it has a local realiser with small average multiplicity. It follows that a poset with small local dimension has a short Crespelle codeword. Is the converse true? In other words, how far apart can the smallest possible average multiplicity be from the smallest possible maximum multiplicity of a local realiser? Of course, this question is trivial, since adding a top element to a nontrivial poset does not change its local dimension. By repeatedly adding new top elements to a $d$-local-dimensional poset, we obtain $d$-local-dimensional posets with local realisers that have average multiplicity arbitrarily close to $1$. The question becomes nontrivial (or at least not obviously trivial) if we consider both cardinality and local dimension.

\begin{question}
For $d,n\in \N$, what is the minimal encoding length of a poset with local dimension $d$ and cardinality $n$?
\end{question}

The dimension of a lexicographic sum is determined by the dimensions of the summands and of the indexing poset, but it's not clear whether or not the same is true of local dimension. As Bosek, Gryczuk, and Trotter stated in \cite{planar}, it's an open question whether or not Inequality~\ref{ineq:lex3} in Proposition~\ref{prop:lex} can be improved, even when the indexing poset is an antichain.
\begin{question}
Can any of the bounds in Proposition~\ref{prop:lex} be improved?
\end{question}

Kim et al.~\cite{localdim} asked whether or not $\ldim{\cube{n}{}} = n$ for all $n$. We believe that this is not the case, and that our best lower bound is asymptotically correct.

\begin{conjecture}\label{conj:weakcube}
As $n\to\infty$, $\ldim{\cube{n}{}} = \Theta\big(\frac{n}{\log n}\big)$.
\end{conjecture}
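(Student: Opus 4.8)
The lower-bound half of the conjecture is already available: the estimates proved above give $\ldim{\cube{n}{}} \geq \ldim{\cube{n}{1,\lfloor n/2\rfloor}} \geq \frac{n}{\log n} - O\big(\frac{n\log\log n}{(\log n)^2}\big)$, so the remaining task is the upper bound $\ldim{\cube{n}{}} = O\big(\frac{n}{\log n}\big)$. The natural starting point is to try to globalise the construction in the proof of Theorem~\ref{thm:upperbound}. Fix an $n$-edge bipartite graph $G$ with parts $A$ and $B$, with $|A|\approx n/\log n$ and every vertex of $A$ of degree $\Delta\approx\log n$, and identify $\cube{n}{}$ with the power set of $E(G)$. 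For $v\in A$ and $W\subseteq\Gamma(v)$, let $L_{v,W}$ list all edge-sets $Z$ with $\Gamma_Z(v)=W$ by increasing size, followed by all $Z$ with $\Gamma_Z(v)\not\subseteq W$ by increasing size; exactly as in Theorem~\ref{thm:upperbound} one checks that these partial linear extensions, together with the linear extension ordering sets by increasing size, form a local realiser of $\cube{n}{}$ (to reverse an incomparable pair $(S,T)$, pick an edge $vu\in T\setminus S$ and use $L_{v,\Gamma_S(v)}$). The obstruction is the multiplicities: an edge-set meeting roughly half the neighbourhood of every vertex of $A$ — a typical middle-layer set — lies in the second block of $L_{v,W}$ for about $2^{\Delta}$ values of $W$ per vertex $v$, giving multiplicity $\approx|A|\cdot 2^{\Delta}\approx n^2/\log n$.

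The first repair I would try is to split incomparable pairs $(S,T)$ by the size of the smaller set. When that size is at most $\ell_0:=(\log n)^{2}$, restrict the second block of $L_{v,W}$ to sets of size $\le\ell_0$ and the first block to sets of size $>\ell_0$; with $\Delta\approx\log n-\log\ell_0-\log\log n$ and $|A|\approx n/\Delta$ this keeps every set of size $\le\ell_0$ at multiplicity $O(\ell_0 2^{\Delta})=O(n/\log n)$ and every larger set at multiplicity $|A|+O(1)=(1+o(1))\frac{n}{\log n}$. Composing with the complementation anti-automorphism of $\cube{n}{}$ handles, in the same way, every incomparable pair whose larger element has size at least $n-\ell_0$. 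What is left are the \emph{bulk} pairs: incomparable $(S,T)$ with $\ell_0<|T|<|S|<n-\ell_0$.

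The bulk pairs are the crux, and handling them appears to need an idea not in this paper — which is why the statement is only a conjecture. Two obvious attacks stall. First, one can write $\cube{n}{}\cong\cube{a}{}\times\cube{n-a}{}$ and use $\ldim{P\times Q}\le\ldim{P}+\ldim{Q}$ (proved by breaking ties in a local realiser of one factor with a fixed linear extension of the other); but this inequality is merely subadditive, and since no non-trivial upper bound is known for $\ldim{\cube{m}{}}$ for any $m$ to serve as a base case, the recursion only reproduces $\ldim{\cube{n}{}}\le n$ — the same obstruction blocks any attempt to lift a two-layer bound such as Theorem~\ref{thm:upperbound} to the whole cube. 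Second, a local realiser built from partial linear extensions in two blocks is the same thing as a low-degree biclique cover of the bulk pairs — the first block lists some sets, the second lists sets none of which is contained in a set from the first block — but every natural choice of the blocks (by containment in a fixed reference set, or by the principal downset of a single set) catches a typical middle-layer set exponentially often.

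What I think is genuinely needed is an efficient family of two-block partial linear extensions covering the bulk pairs in which each middle-layer set appears only $O(n/\log n)$ times. The most promising route seems to be first to prove good upper bounds on $\ldim{\cube{n}{\ell,k}}$ when $\ell$ and $k$ both lie in $[\varepsilon n,(1-\varepsilon)n]$ — Theorem~\ref{thm:lkldim} gives only $\Omega(1)$ there, which is consistent with but far from establishing a bound of $O_\varepsilon(1)$ — and then to assemble these into a single global realiser without the multiplicities of bulk sets adding up over the $\Theta(n^2)$ relevant pairs of layers. Both steps look to require a new device, most plausibly one exploiting that $\cube{n}{}$ is a product of $n$ copies of $\mathbf{2}$ in a way that improves on plain subadditivity.
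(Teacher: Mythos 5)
This statement is a conjecture, not a theorem: the paper offers no proof of it, and explicitly says that ``proving or disproving these conjectures will require new ideas.'' Your proposal is therefore correctly calibrated rather than mistaken --- you supply only the lower-bound half, which is exactly what the paper itself has (the corollary $\ldim{\cube{n}{}} \geq \ldim{\cube{n}{1,\lfloor n/2\rfloor}} \geq \frac{n}{\log n} - O\big(\frac{n\log\log n}{(\log n)^2}\big)$), and you correctly identify that the upper bound $\ldim{\cube{n}{}} = O\big(\frac{n}{\log n}\big)$ is entirely open; the paper's only upper bound is the trivial $n$. So the ``gap'' in your write-up is the gap in the literature, and you name it accurately.

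Your diagnosis of the obstructions also matches the paper's own remarks. The subadditivity observation you make (that $\ldim{\cube{n}{}}\le\frac{\ell}{k}n+\ell$ would follow from a single base case $\ldim{\cube{k}{}}=\ell<k$, but no such base case is known) is stated verbatim in the paper's closing discussion, and your computation that a naive globalisation of the two-layer construction of Theorem~\ref{thm:upperbound} gives middle-layer sets multiplicity on the order of $|A|\cdot 2^{\Delta}$ is the right reason that construction does not lift to the full cube. One small caution: do not present the claim that handling the ``bulk'' layers reduces to proving $\ldim{\cube{n}{\ell,k}}=O_\varepsilon(1)$ for $\ell,k\in[\varepsilon n,(1-\varepsilon)n]$ as if such a bound were plausible on current evidence --- nothing in the paper supports a constant bound there, and even granting strong two-layer bounds, assembling $\Theta(n^2)$ pairs of layers into one realiser without multiplicities accumulating is itself the hard part. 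If you were asked to \emph{prove} this statement, the honest answer is that no proof exists; what you have written is a reasonable survey of why.
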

If this conjecture is true, it would imply that $\ldim{P} = O\left(\twodim{P} / \log \twodim{P}\right)$ for every poset $P$, and hence provide a new proof of Kim et al.'s theorem that $\ldim{P} = O\left(|P| / \log |P|\right)$ for every $P$, perhaps even improving it by a constant factor.

We may even propose the following stronger conjecture.
\begin{conjecture}\label{conj:strongcube}
There exists a universal constant $c$ such that, for any $t\in\N$, as $n\to\infty$, $\ldim{\mathbf{t}^n} \sim c\frac{n}{\log_t n}$.
\end{conjecture}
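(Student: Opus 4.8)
The plan would be to establish $\ldim{\mathbf{t}^n}=(1+o(1))\frac{n}{\log_t n}$ for every fixed $t$, which would prove the conjecture with $c=1$; since the matching lower bound below forces $c\ge1$ in any proof, $c=1$ is the natural target. The lower bound is the routine half --- a direct extension of Theorem~\ref{thm:lkldim} --- and I would do it first. All the difficulty is in the upper bound: no construction beating the trivial bound $n$ is known even for $\mathbf{2}^n=\cube{n}{}$, so the upper half of this conjecture contains the upper half of Conjecture~\ref{conj:weakcube}, with a precise constant, as a special case.

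\textbf{Lower bound.} Repeat the Crespelle-code argument verbatim, changing only the random poset. Let $A=\{j e_i : i\in[n],\ 1\le j\le t-1\}\subseteq\mathbf{t}^n$, so $|A|=n(t-1)$ and $\log|A|=(1+o(1))\log n$, and let $\mathcal A$ be a largest antichain of $\mathbf{t}^n$; pigeonholing on the coordinate sum gives $|\mathcal A|\ge t^n/(n(t-1)+1)$, hence $\log|\mathcal A|=n\log t-O(\log n)$. Take $B=[m]$, choose $v_1,\dots,v_m\in\mathcal A$ independently and uniformly, and let $P$ be the poset on $A\sqcup B$ in which $b\in B$ lies above $j e_i$ exactly when $j\le(v_b)_i$. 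Since the down-set of $v$ in $A$ recovers $v$, we get $H(P)\ge m\log|\mathcal A|$. Exactly as in Theorem~\ref{thm:lkldim}, delete from $B$ all but one vertex in each class of equal down-set to obtain $P'$; then $P$ is a lexicographic sum of antichains over $P'$ and $P'$ is not a chain, so Proposition~\ref{prop:lex} gives $\ldim{P}=\ldim{P'}$, and since every element of $\mathcal A$ lies above all of $A$ in $\mathbf{t}^n$, the maps $je_i\mapsto je_i$ and $v\mapsto v$ realise $P'$ as an induced suborder of $\mathbf{t}^n$. Feeding $d=\Expect[\ldim P]$ through Theorem~\ref{thm:shannon} with the Crespelle alphabet of size $3(|A|+m)$ and optimising at $m=\lfloor|A|(\log|A|-1)\rfloor$ then gives
\[
  \ldim{\mathbf{t}^n}\ \ge\ d\ \ge\ (1-o(1))\,\frac{\log|\mathcal A|}{\log|A|}\ =\ (1-o(1))\,\frac{n}{\log_t n}.
\]

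\textbf{Upper bound.} The obvious attempt generalises the two-layer construction of Theorem~\ref{thm:upperbound}: partition $[n]$ into $r\approx n/\log_t n$ blocks of size $s\approx\log_t n$ (so $t^s\approx n$), and for each block $\beta$ and each value $x\in\mathbf{t}^{\beta}$ let $L_{\beta,x}$ list first every $v$ with $v|_\beta=x$ and then every $w$ with $w|_\beta\not\le x$, ordering each part by a linear extension of the residual product. If $u\not\ge v$ then some coordinate $j$ has $u_j<v_j$, and $u$ precedes $v$ in $L_{\beta_j,\,u|_{\beta_j}}$, so this is a local realiser --- but an inefficient one. The obstruction, which is exactly what the two-layer argument avoided by putting only the sparse layer in the second part, is that a \emph{dense} element $w$ lies in the second part of $L_{\beta,x}$ for every $x$ that fails to dominate $w|_\beta$, i.e.\ for nearly all of $\mathbf{t}^{\beta}$ when $w|_\beta$ is large; that is $\approx t^s\approx n$ extensions per block, so the multiplicity of $w$ is $\Omega(n)$ and the construction only recovers the trivial bound.

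\textbf{The obstacle.} What is missing is a local realiser of $\mathbf{t}^n$ in which dense elements are cheap simultaneously in their role as the lower element of a covered pair and as the upper element, and no such construction is known; for $t=2$ this is precisely the open upper half of Conjecture~\ref{conj:weakcube}. I also see no way to import a hypothetical solution of the $t=2$ case: $\mathbf{t}^n$ has $t$-dimension exactly $(t-1)n$, so it embeds in no $\cube{m}{}$ with $m<(t-1)n$, and restricting a good local realiser of $\cube{(t-1)n}{}$ to $\mathbf{t}^n$ only yields $\Theta_t\!\big(n/\log_t n\big)$ with a constant growing like $(t-1)/\ln t$, which is not universal. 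A genuinely $t$-aware construction would be needed --- plausibly recursive, exploiting $\mathbf{t}^n=\mathbf{t}^{n/2}\times\mathbf{t}^{n/2}$ together with a sub-multiplicativity of local dimension over products that is itself not currently available --- and finding it is the crux of the conjecture.
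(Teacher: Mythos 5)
This statement is a conjecture, and the paper does not prove it; all it offers is the two-sentence entropy argument immediately following the conjecture, which establishes only the lower bound $\ldim{\mathbf{t}^n} \geq (1+o(1))\frac{n}{\log_t n}$ (and hence that any constant $c$ for which the conjecture holds must satisfy $c\geq 1$). Your proposal is honest about exactly this: your lower bound is the same Crespelle-code computation as the paper's remark, differing only in the random model (the paper takes $m$ points mapped by a uniformly random function into all of $\mathbf{t}^n$, anchored by the $(t-1)n$ points $je_i$; you take $m$ independent uniform points of a largest antichain, which costs only an $O(\log n)$ loss in entropy per point and changes nothing asymptotically), and your identification of the upper bound as the open crux --- including why the naive block construction from Theorem~\ref{thm:upperbound} collapses to the trivial bound when both elements of a pair are dense, and why restricting a hypothetical good realiser of $\cube{(t-1)n}{}$ cannot yield a universal constant --- is an accurate assessment of the state of the problem.

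Two small repairs to your lower bound as written. First, the assertion that ``every element of $\mathcal A$ lies above all of $A$ in $\mathbf{t}^n$'' is false (for $t=2$ a middle-layer set contains only half the singletons) and is not what you need; what you need is that $je_i \leq v$ in $\mathbf{t}^n$ if and only if $j\leq v_i$, which is exactly your definition of the relations between $A$ and $B$. Second, for the map $je_i\mapsto je_i$ to embed $P'$ as an induced suborder you must give $P$ the induced order on $A$ as well, i.e.\ the chains $e_i < 2e_i < \cdots < (t-1)e_i$; if you declare $A$ an antichain (as in a literal two-level poset) the embedding fails for $t\geq 3$. Including these relations costs nothing: they are deterministic, so the entropy bound is unaffected, and the deduplication step still expresses $P$ as a lexicographic sum of antichains over $P'$, so Proposition~\ref{prop:lex} still applies.
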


One can prove a lower bound of this form using an entropy argument similar to the proof of Theorem~\ref{thm:lkldim}. Indeed, suppose $n \ll m=n^{1+o(1)}$. Let $P$ be a random partial order on $[(t-1)n+m]$ induced by a random function from $[(t-1)n+m]$ to $\mathbf{t}^n$ that maps $[(t-1)n]$ to the set of $n$-tuples with exactly one nonzero entry in lexicographic order. Each such function induces a different labelled poset. Let $d = \Expect[\ldim{P}]$. Then $H(P) = (1+o(1))mn\log t \leq (m+(t-1)n)d\log\left(m+(t-1)n\right) = (1+o(1))md\log n$, so $d \geq (1+o(1))\frac{n}{\log_t n}$.

Even finding one cube with local dimension smaller than its dimension would be very good. If $\ldim{\cube{k}{}} = \ell < k$, then, by subadditivity, $\ldim{\cube{n}{}} \leq \frac{\ell}{k}n + \ell$ for all $n$.

Searching for good local realisers by computer does not seem feasible. Proving or disproving these conjectures will require new ideas.



\bibliography{ldimboolean_new}{}
\bibliographystyle{plain}
\end{document}